\renewcommand\eqref[1]{(\ref{#1})} %Need with hyperref
\numberwithin{equation}{section}
\theoremstyle{plain}
\newtheorem{thm}{Theorem}[section]
\newtheorem{cor}[thm]{Corollary}
\theoremstyle{definition}
\newtheorem{rem}[thm]{Remark}
\newcommand\norm[1]{\left\lVert#1\right\rVert}
\def\e[#1]{{\textrm{e}}^{#1}}
\begin{document}

   \title[Sharp remainder of the $L^{p}$-Poincar\'e inequality]
 {Sharp remainder of the $L^{p}$-Poincar\'e inequality for Baouendi-Grushin vector fields}

\author[K. Apseit]{Kuralay Apseit}
\address{
  Kuralay Apseit:
  \endgraf
  SDU University, Kaskelen, Kazakhstan
     \endgraf
  and 
  \endgraf
  Institute of Mathematics and Mathematical Modeling, Kazakhstan
  \endgraf
  {\it E-mail address} {\rm kuralayapseit@gmail.com}
  }

\author[N. Yessirkegenov]{Nurgissa Yessirkegenov}
\address{
  Nurgissa Yessirkegenov:
  \endgraf
  KIMEP University, Almaty, Kazakhstan
     \endgraf
  {\it E-mail address} {\rm nurgissa.yessirkegenov@gmail.com}
  }

  \author[A. Zhangirbayev]{Amir Zhangirbayev}
\address{
  Amir Zhangirbayev:
 \endgraf
   SDU University, Kaskelen, Kazakhstan
  \endgraf
  and 
  \endgraf
  Institute of Mathematics and Mathematical Modeling, Kazakhstan
   \endgraf
  {\it E-mail address} {\rm
amir.zhangirbayev@gmail.com}
  }

\thanks{This research is funded by the Committee of Science of the Ministry of Science and Higher Education of the Republic of Kazakhstan (Grant No. AP23490970).}

     \keywords{Poincar\'e inequality, Baouendi-Grushin operator, eigenfunction, porous medium equation.}
     \subjclass[2020]{39B62, 35B44, 35A01}

     \begin{abstract} In this paper, we establish a sharp remainder formula for the Poincar\'e inequality for Baouendi-Grushin vector fields in the setting of $L^{p}$ for complex-valued functions. In special cases, we recover previously known results. Consequently, we also derive the $L^{p}$-Poincar\'e inequality with an explicit optimal constant under a certain assumption. Additionally, we provide estimates of the remainder term for $p\geq2$ and $1<p<2\leq n<\infty$. As an application, we obtain a blow-up in finite time and global existence of the positive solutions to the initial-boundary value problem of the doubly nonlinear porous medium equation involving a degenerate nonlinear operator $\Delta_{\gamma,p}$.  
     
     \end{abstract}
     \maketitle

\section{Introduction}

The Poincar\'e inequality plays a crucial role in many questions from nonlinear partial differential equations, spectral theory and geometric analysis. The classical Poincar\'e inequality \cite[Chapter 5.8.1]{evans2022partial} reads as follows: let $1\leq p<\infty$ and $\Omega$ be any bounded subset of the domain $\mathbb{R}^{n}$. Then, there exists a constant $C$ depending only on $\Omega$ and $p$ such that for every $u$ in the Sobolev space $W_{0}^{1,p}(\Omega)$ of zero-trace functions, we have
\begin{align}\label{Poincare}
\norm{u}_{L^{p}(\Omega)}\leq C\norm{\nabla u}_{L^{p}(\Omega)}.
\end{align}
The inequality (\ref{Poincare}) has been extensively studied, and we refer the reader to \cite{payne1960optimal, jerison1986poincare, lu1992weighted, hurri1994improved, lu1994sharp, franchi1995representation, franchi1996relationship, semmes1996finding, bebendorf2003note, lieb2003poincare, lott2007weak, boulkhemair2007uniform, keith2008poincare, li2015poincare, ozawa2020sharp, bobkov2023improved, suraganozawafred, do2024scale} for notable results and developments.

There has also been a significant interest in Poincar\'e inequalities on the sub-Riemannian space $\mathbb{R}^n=\mathbb{R}^{m}\times\mathbb{R}^{k}$ defined by the Baouendi-Grushin vector fields:
\begin{align*}
X_i = \frac{\partial}{\partial x_i}, \quad i = 1, \ldots, m, \qquad
Y_j = |x|^{\gamma} \frac{\partial}{\partial y_j}, \quad j = 1, \ldots, k, 
\end{align*}
where $x=(x_1, \ldots, x_m)\in\mathbb{R}^m$, $y=(y_1, \ldots, y_k)\in\mathbb{R}^{k}$ with $m,k\geq1$ and $\gamma\geq0$. For example, in \cite{franchi1994two, franchi1994weighted} Franchi, Guti\'errez and Wheeden extended the class of weight functions for which the Sobolev-Poincar\'e inequalities are known to hold, thereby obtaining the following inequality for metric balls related to the generalized Grushin differential operator $\Delta_{\mu}=\Delta_{x}+\mu^2(x)\Delta_{y}$:
\begin{align}\label{fgw}
\left( \frac{1}{w_2(B)} \int_B |u|^q w_2(z) \, dz \right)^{1/q}\leq cr\left( \frac{1}{w_1(B)} \int_B |\nabla_{\mu} u|^p w_1(z) \, dz \right)^{1/p},
\end{align}
where $u\in C^{\infty}_{0}(B)$ and the weight functions $w_{1}$, $w_{2}$ satisfy a specific condition. The constant $c$, in (\ref{fgw}), is independent of $u$ and $B$, $1\leq p\leq q < \infty$ and $w(B)=\int_{B}w(z)dz$. We note that the inequality (\ref{fgw}), for $w_1(B)=w_{2}(B)=1$, implies the Sobolev inequality, and we refer to the results of Monti \cite{monti2006sobolev} for related findings.

Then, D'Ambrosio \cite[Theorem 3.7]{d2004hardy}, mainly using techniques developed in \cite{mitidieri2000simple, d2004hardy}, proved Poincar\'e inequality on domains $\Omega$ contained in a slab: let $\Omega$ be an open set on $\mathbb{R}^n$. Suppose that there exists $R>0$, a real number $s$ and an integer $1\leq j \leq m$ such that for any $z=(x,y)\in\Omega$, it follows that $|x_j-s|\leq R$. Then, for all $u\in C^{1}_{0}(\Omega)$, we have
\begin{align*}
\int_{\Omega}|u|^{p}dz\leq c\int_{\Omega}|\nabla_{\gamma}u|^{p}dz,   
\end{align*}
where $c=(pR)^{p}$.

Another interesting result in this direction, to which we will return later, was obtained by Suragan and the second author in \cite[Corollary 1.2]{suragan2023sharp} by employing a simple method from \cite{ozawa2020sharp} that does not involve the variational principle: suppose that the minus Dirichlet Baouendi-Grushin operator on $\Omega$ has a positive eigenvalue $\lambda$ and a corresponding positive eigenfunction $\phi$. Then, we have
\begin{align*}
\frac{1}{\lambda} \int_{\Omega} \left| \nabla_{\gamma} u - \frac{\nabla_{\gamma} \phi}{\phi} u \right|^2dz
= \frac{1}{\lambda} \int_{\Omega} |\nabla_{\gamma} u|^2dz - \int_{\Omega} |u|^2dz
\end{align*}
for all $u\in W^{1,2}_{\gamma}(\Omega)$. 

Most recently, D'Arca \cite[Theorem 4.4]{d2024weighted} derived the following Poincar\'e weigh-ted inequalities while avoiding the symmetric rearrangement argument \cite{d2024unified}, thereby simplifying the analysis in Euclidean and non-Euclidean contexts: let \( p \geq 2 \), \( \alpha \geq 0 \), and \( \theta \geq 1 \) be fixed. For all \( u \in W_{\gamma}^{1,p}(B_R^{\rho}, |\nabla_\gamma \rho|^{\alpha} \rho^{\theta - Q}) \), the following inequalities hold:
\begin{multline*}
\left( \frac{\nu_1(p, \theta)}{R} \right)^p \int_{B_R^\rho} \frac{|u|^p}{\rho^{Q - \theta}} |\nabla_\gamma \rho|^{\alpha + p} \, dz \leq 
\int_{B_R^\rho} \left| \nabla_\gamma \rho \cdot \frac{\nabla_\gamma u}{|\nabla_\gamma \rho|} \right|^p \frac{|\nabla_\gamma \rho|^{\alpha}}{\rho^{Q - \theta}} \, dz
\\\leq 
\int_{B_R^\rho} \frac{|\nabla_\gamma u|^p}{\rho^{Q - \theta}} |\nabla_\gamma \rho|^{\alpha} \, dz.
\end{multline*}
Moreover, the chain of inequalities is sharp since the function \( u = \varphi\left( \frac{\nu_1(p, \theta)}{R} \rho \right) \in W_{\gamma}^{1,p}(B_R^{\rho}, |\nabla_\gamma \rho|^{\alpha} \rho^{\theta - Q}) \) attains both equalities. Here, $\nu_1(p,\theta)$ is the first zero of an appropriate special function and $B^{\rho}_{R}=\{z\in\mathbb{R}^n:\rho(z)<R\}$. 

The purpose of this paper, however, is to extend the results of Suragan and the second author \cite{suragan2023sharp} from $p=2$ to any $1<p<\infty$. More precisely, we obtain the following identity: let $1<p<\infty$ and $\Omega\subset\mathbb{R}^{m+k}$ be a set supporting the divergence formula. Then, for all complex-valued $u\in W^{1,p}_{\gamma}(\Omega)$ and all non-zero, complex-valued and twice differentiable $\phi$, we have
\begin{align}\label{main identity intro}
\int_{\Omega}^{}C_p\left(\nabla_\gamma u,\nabla_\gamma u - \frac{\nabla_\gamma \phi}{\phi} u\right)dz = \int_{\Omega}^{}|\nabla_{\gamma}u|^{p}dz+\int_{\Omega}^{}\frac{|u|^{p}}{|\phi|^{p-2}\phi}\Delta_{\gamma,p}\phi dz,
\end{align}
where the functional $C_p(\cdot,\cdot)$ is given by
\begin{align}\label{cp functional intro}
C_p(\xi, \eta) = |\xi|^p - |\xi - \eta|^p - p |\xi - \eta|^{p-2} \textnormal{Re}  (\xi - \eta) \cdot \overline{\eta}\geq0.
\end{align}

Assuming that the negative Dirichlet $p$-Laplace Grushin ($p$-Grushin) operator on some bounded open subset $D\subset \mathbb{R}^{m+k}$ has a positive eigenvalue $\lambda$ with an associated positive eigenfunction $\phi$, we get the following sharp remainder formula of the $L^{p}$-Poincar\'e inequality for Baouendi-Grushin vector fields from (\ref{main identity intro}):
\begin{align}\label{main poincare intro}
\int_{D}C_p\left(\nabla_\gamma u,\nabla_\gamma u - \frac{\nabla_\gamma \phi}{\phi} u\right)dz=\int_{D}|\nabla_{\gamma}u|^{p}dz-\lambda\int_{D}|u|^{p}dz
\end{align}
for all complex-valued $u\in W^{1,p}_{\gamma}(D)$. Immediately, we see that the identity (\ref{main identity intro}) together with (\ref{main poincare intro}) generalizes \cite[Theorem 1.1]{suragan2023sharp} and \cite[Corollary 1.2]{suragan2023sharp} for any $1<p<\infty$, respectively. In addition, due to the presence of the $C_p$-functional in both identities, (\ref{main identity intro}) and (\ref{main poincare intro}), we are able to provide a simple characterization of nontrivial extremizers and their existence by \cite[Step 3 of Proof of Lemma 3.4]{cazacu2024hardy} and \cite[Lemma 2.2 and 2.3]{CT24}. Moreover, the same results with \cite[Lemma 2.4]{CT24} allow us to obtain estimates of the remainder term for $p\geq2$ (see Corollary \ref{cor laptev}) and $1<p<2\leq n<\infty$ (see Corollary \ref{cor chinese}). As a result, we recover the Poincar\'e improvement of Bobkov and Kolonitskii \cite[Theorem 1.5, Equation (2.10)]{bobkov2023improved} with an explicit constant.

If we set $\lambda=\lambda_1>0$ to be the first eigenvalue of $-\Delta_{\gamma,p}$ with an associated (presumed positive) eigenfunction $\phi=\phi_1$ on $D$, then, from (\ref{main poincare intro}), we are able to obtain the following $L^{p}$-Poincar\'e inequality for all complex-valued $u\in W^{1,p}_{\gamma}(D)$:
\begin{align}\label{main poincare l1 intro}
\int_{D}|u|^{p}dz\leq\frac{1}{\lambda_1}\int_{D}|\nabla_{\gamma}u|^{p}dz,
\end{align}
where the constant $\frac{1}{\lambda_1}$ is optimal and attained if and only if $\frac{u}{\phi_1}=\text{const}$.

In this paper, we also discuss applications of (\ref{main poincare l1 intro}) to the study of blow-up and global existence of the positive solutions to the initial boundary value problem of the doubly nonlinear porous medium equation (PME) related to the $p$-Grushin operator $\Delta_{\gamma,p}$:
\begin{align}\label{pde}
\begin{cases}
u_t - \Delta_{\gamma,p}(u^{\ell}) = f(u), & z \in D,\ t > 0, \\
u(z, t) = 0, & z \in \partial D,\ t > 0, \\
u(z, 0) = u_0(z) \geq 0, & z \in \overline{D}.
\end{cases}
\end{align}
Here, $D$ is an open bounded domain of $\mathbb{R}^{m+k}$, $\ell\geq1$, $f$ is locally Lipschitz continuous on $\mathbb{R}$, satisfies $f(0)=0$ and is strictly positive for all $u>0$. The initial condition $u_0$ is a positive function in $C^{1}(\overline{D})$ such that $u_0(z)=0$ for all $z\in\partial D$.

The PME is an important example of a nonlinear evolution equation of parabolic type. It arises in the modeling of various natural processes, such as fluid motion, heat transfer and diffusion. One of the most well-known examples is the modeling of isentropic gas flow through porous medium, developed independently by Leibenzon \cite{leibenzon1930motion} and Muskat \cite{muskat1938flow}. Another important application is in the study of radiation occurring in plasmas (ionized gases) at very high temperatures, pioneered by Zel’dovich and Raizer \cite{zel2002physics}. In fact, this application contributed significantly to the mathematical theory’s development. We refer to the V\'azquez’s book \cite{vazquez2007porous} for an extensive coverage of the theory of PMEs.

Recently, Poincar\'e inequality was used to study blow-up and global existence properties of the positive solutions of PMEs and other kinds of partial differential equations \cite{suragan2023sharp, ruzhansky2023global,  dukenbayeva2024global, sabitbek2024global, dukenbayeva2025global, jabbarkhanov2025blow, jabbarkhanov2025global}. In this paper, we extend the results of Dukenbayeva \cite[Theorem 1.3 and 1.6]{dukenbayeva2024global} from $p=2$ to any $1<p<\infty$. 

The paper is organized as follows. Section \ref{prem} introduces basic definitions, notation and preliminary results. In Section \ref{main results}, we prove the sharp remainder formula of $L^{p}$-Poincar\'e inequality for Baouendi-Grushin vector fields. As a result, we also derive the $L^{p}$-Poincar\'e inequality with an explicit optimal constant under a particular condition. Additionally, we show that the obtained results imply the estimates of the remainder term. Finally, in Section \ref{applications}, applications to the initial boundary value problem of the doubly nonlinear PME are considered.

\section{Preliminaries}\label{prem}

In this section, we recall some notation and preliminary results regarding the Baouendi-Grushin operator and Sobolev spaces. 

Let $z=(x_{1},\ldots,x_{m},y_{1},\ldots,y_{k})$ or $z=(x,y)$ $\in\mathbb{R}^{m}\times\mathbb{R}^{k}$ with $m+k=n$ and $m,k\geq1$. The sub-elliptic gradient is defined as
\begin{align*}
\nabla_\gamma = (X_1, \ldots, X_m, Y_1, \ldots, Y_k) = (\nabla_x, |x|^{\gamma} \nabla_y),
\end{align*}
where
\begin{align}\label{grushin vector fields}
X_i = \frac{\partial}{\partial x_i}, \quad i = 1, \ldots, m, \qquad
Y_j = |x|^{\gamma} \frac{\partial}{\partial y_j}, \quad j = 1,\ldots,k 
\end{align}
with $\gamma \geq 0$ and $|x| = \left( \sum_{i=1}^{m} x_i^2 \right)^{1/2}$ represents the standard Euclidean norm of $x$. The Baouendi-Grushin operator $\Delta_\gamma$ is a differential operator on $\mathbb{R}^{m+k}$ defined by
\begin{align}\label{grushin}
\Delta_\gamma := \sum_{i=1}^m X_i^2 + \sum_{j=1}^k Y_j^2 = \Delta_x + |x|^{2\gamma} \Delta_y = \nabla_\gamma \cdot \nabla_\gamma.
\end{align}
Here, when $\gamma = 0$, in (\ref{grushin}), the Baouendi-Grushin operator $\Delta_\gamma$ reduces to the classical Laplacian on $\mathbb{R}^{m+k}$. The $p$-Grushin of a complex-valued function $\phi$ on $\Omega\subset\mathbb{R}^{m+k}$ associated with the vector fields (\ref{grushin vector fields}) is defined by
\begin{align*}
\Delta_{\gamma,p}\phi=\nabla_{\gamma}\cdot(|\nabla_{\gamma}\phi|^{p-2}\nabla_{\gamma}\phi), \quad 1<p<\infty.
\end{align*}
When $\gamma$ is an even positive integer, $\Delta_\gamma$ can be expressed as a sum of squares of smooth vector fields satisfying Hörmander's condition on the Lie algebra \begin{align}
\operatorname{rank} \operatorname{Lie}\left[X_1, \ldots, X_m, Y_1, \ldots, Y_k\right]=n \text {. } \nonumber
\end{align}
There is a natural family of anisotropic dilations associated with $\Delta_\gamma$:
\begin{align*}
\delta_{a}(x,y) := (a x, a^{\gamma+1} y), \quad a > 0, \ (x,y) \in \mathbb{R}^{m+k}.
\end{align*}
This shows that the degeneracy of $\Delta_\gamma$ becomes more severe as $\gamma \to \infty$. The corresponding change of variable formula for the Lebesgue measure is:
\begin{align*}
d \circ \, \delta_{a}(x,y) = a^Q \, dx \, dy,
\end{align*}
where the dilation's homogeneous dimension is given by
\begin{align*}
Q = m + k(\gamma + 1).
\end{align*}

Let $\Omega \subset \mathbb{R}^{m+k}$ be a set that supports the divergence formula. The Sobolev space $W^{1,p}_{\gamma}(\Omega)$ associated with Baouendi-Grushin vector fields (\ref{grushin vector fields}) is defined by
\begin{align*}
W^{1,p}_{\gamma}(\Omega):=\{u\in L^{p}(\Omega): \nabla_{\gamma}u\in L^{p}(\Omega)\}.
\end{align*}
We note that $W^{1,p}_{\gamma}(\Omega)$ is the closure of $C^{\infty}_{0}(\Omega)$ in the norm 
\begin{align*}
\norm{u}_{W^{1,p}_{\gamma}(\Omega)} = \left( \int_{\Omega} |\nabla_{\gamma} u|^pdz \right)^{\frac{1}{p}}.    
\end{align*}

\section{Main results}\label{main results}

In this section, we prove the sharp remainder formula of the $L^{p}$-Poincar\'e inequality for Baouendi-Grushin vector fields, show the derivation of the inequality with an explicit optimal constant, and provide the estimates of the remainder term covering the full range of $1<p<\infty$.

\begin{thm}\label{main thm}
Let $1<p<\infty$ and $\Omega\subset\mathbb{R}^{m+k}$ be a set supporting the divergence formula. 
\begin{enumerate}
    \item Then, for all complex-valued $u\in W^{1,p}_{\gamma}(\Omega)$ and all non-zero, complex-valued and twice differentiable $\phi$, we have
\begin{align}\label{main identity}
\int_{\Omega}^{}C_p\left(\nabla_\gamma u,\nabla_\gamma u - \frac{\nabla_\gamma \phi}{\phi} u\right)dz = \int_{\Omega}^{}|\nabla_{\gamma}u|^{p}dz+\int_{\Omega}^{}\frac{|u|^{p}}{|\phi|^{p-2}\phi}\Delta_{\gamma,p}\phi dz,
\end{align}
where the functional $C_p(\cdot,\cdot)$ is given in (\ref{cp functional intro}).
\item Furthermore, for $1<p<\infty$, the $C_p$-functional vanishes if and only if $\frac{u}{\phi}=\text{const}$.
\end{enumerate}
\end{thm}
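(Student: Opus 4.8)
The plan is to prove the identity \eqref{main identity} by a direct pointwise-plus-integration-by-parts computation, and then read off the equality case from the strict convexity that underlies the nonnegativity of $C_p$.

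First I would work pointwise. Write $\xi=\nabla_\gamma u$ and $\eta=\nabla_\gamma u-\frac{\nabla_\gamma\phi}{\phi}u$, so that $\xi-\eta=\frac{\nabla_\gamma\phi}{\phi}u$. Then by definition \eqref{cp functional intro},
\begin{align*}
C_p(\xi,\eta)=|\nabla_\gamma u|^p-\left|\frac{\nabla_\gamma\phi}{\phi}u\right|^p-p\left|\frac{\nabla_\gamma\phi}{\phi}u\right|^{p-2}\mathrm{Re}\,\left(\frac{\nabla_\gamma\phi}{\phi}u\right)\cdot\overline{\left(\nabla_\gamma u-\frac{\nabla_\gamma\phi}{\phi}u\right)}.
\end{align*}
Expanding the inner product and collecting the $\left|\frac{\nabla_\gamma\phi}{\phi}\right|^p|u|^p$ terms, the second and the last contribution combine so that
\begin{align*}
C_p(\xi,\eta)=|\nabla_\gamma u|^p+(p-1)\frac{|\nabla_\gamma\phi|^p}{|\phi|^p}|u|^p-p\,\frac{|\nabla_\gamma\phi|^{p-2}}{|\phi|^{p-2}\overline\phi}|u|^{p-2}\overline{u}\,\nabla_\gamma\phi\cdot\overline{\nabla_\gamma u},
\end{align*}
after using $|u|^2=u\overline u$ to simplify $|u/\phi|^{p-2}\cdot(\text{stuff})$. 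The task is then to show that the integral over $\Omega$ of the last two terms equals $\int_\Omega \frac{|u|^p}{|\phi|^{p-2}\phi}\Delta_{\gamma,p}\phi\,dz$.

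The key step is an integration by parts (the divergence formula, available by hypothesis on $\Omega$) applied to $\int_\Omega \frac{|u|^p}{|\phi|^{p-2}\phi}\,\nabla_\gamma\cdot(|\nabla_\gamma\phi|^{p-2}\nabla_\gamma\phi)\,dz$. Moving the divergence onto the scalar factor $\frac{|u|^p}{|\phi|^{p-2}\phi}$ produces a term $-|\nabla_\gamma\phi|^{p-2}\nabla_\gamma\phi\cdot\nabla_\gamma\!\left(\frac{|u|^p}{|\phi|^{p-2}\phi}\right)$; computing this gradient via the product/chain rule (differentiating $|u|^p$, $|\phi|^{2-p}$ and $\phi^{-1}$ separately, remembering $\nabla|u|=\mathrm{Re}(\overline u\,\nabla u)/|u|$ etc.) yields exactly $-(p-1)\frac{|\nabla_\gamma\phi|^p}{|\phi|^p}|u|^p+p\,\frac{|\nabla_\gamma\phi|^{p-2}}{|\phi|^{p-2}\overline\phi}|u|^{p-2}\overline u\,\nabla_\gamma\phi\cdot\overline{\nabla_\gamma u}$ up to cancellations, which matches the two terms above with the opposite sign. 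Substituting back gives \eqref{main identity}. For the boundary term to vanish one uses that $u\in W^{1,p}_\gamma(\Omega)$ is approximated by $C_0^\infty(\Omega)$ functions (and $\phi$ is non-vanishing and twice differentiable, so the coefficient is bounded on the relevant sets), so it suffices to prove the identity for $u\in C_0^\infty(\Omega)$ and pass to the limit; I would carry out the computation for smooth compactly supported $u$ and invoke this density.

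For part (2): $C_p(\xi,\eta)\ge0$ with equality in the pointwise inequality
\begin{align*}
|\xi-\eta|^p+p|\xi-\eta|^{p-2}\mathrm{Re}\,(\xi-\eta)\cdot\overline\eta\le|\xi|^p
\end{align*}
precisely when $\eta=0$, by strict convexity of $v\mapsto|v|^p$ on $\mathbb{C}^{m+k}$ for $1<p<\infty$ (the left side is the first-order Taylor expansion of $|\cdot|^p$ at $\xi-\eta$ evaluated at $\xi$, and strict convexity forces the increment $\eta$ to vanish). Hence $\int_\Omega C_p\,dz=0$ iff $\eta=\nabla_\gamma u-\frac{\nabla_\gamma\phi}{\phi}u\equiv0$ a.e., i.e. $\nabla_\gamma\!\left(\frac{u}{\phi}\right)=\frac{1}{\phi}\left(\nabla_\gamma u-\frac{\nabla_\gamma\phi}{\phi}u\right)\equiv0$. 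Since the Baouendi-Grushin fields $\{X_i,Y_j\}$ satisfy the bracket-generating (Hörmander) condition and $\Omega$ is connected, $\nabla_\gamma(u/\phi)=0$ forces $u/\phi$ to be constant; I would cite this connectivity argument (or reduce to the Euclidean case on slices where $|x|>0$ together with continuity across $\{x=0\}$).

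I expect the main obstacle to be bookkeeping in the complex-valued chain rule when differentiating $\frac{|u|^p}{|\phi|^{p-2}\phi}$ — keeping track of which factors are conjugated and verifying that the real parts line up so that the cross term in the integration by parts exactly reproduces the cross term coming from $C_p$ — rather than anything conceptually deep; a secondary (minor) point is justifying the vanishing of the boundary term and the density/approximation step when $\phi$ may approach zero near $\partial\Omega$, which is why restricting first to $u\in C_0^\infty(\Omega)$ is the clean route.
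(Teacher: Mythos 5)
Your proposal is correct and follows essentially the same route as the paper: expand $C_p$ pointwise with $\xi=\nabla_\gamma u$, $\eta=\nabla_\gamma u-\frac{\nabla_\gamma\phi}{\phi}u$ to obtain the Picone-type identity, then apply the divergence formula to the vector field $\frac{|u|^p}{|\phi|^{p-2}\phi}|\nabla_\gamma\phi|^{p-2}\nabla_\gamma\phi$ (with the boundary term killed by $u$ vanishing on $\partial\Omega$) to produce the $\Delta_{\gamma,p}\phi$ term. The only cosmetic difference is in part (2), where you argue $C_p(\xi,\eta)=0\iff\eta=0$ directly from strict convexity of $v\mapsto|v|^p$ (and handle $\nabla_\gamma(u/\phi)=0\Rightarrow u/\phi=\mathrm{const}$ with a connectedness/slice argument), whereas the paper cites this equality characterization from the literature.
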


\begin{rem}
If $p=2$, in (\ref{main identity}), then we recover the result of Suragan and the second author \cite[Theorem 1.1]{suragan2023sharp}:
\begin{align*}
\int_{\Omega} \left| \nabla_{\gamma} u - \frac{\nabla_{\gamma} \phi}{\phi} u \right|^2dz 
= \int_{\Omega}|\nabla_{\gamma} u|^2dz + 
\int_{\Omega}\frac{|u|^2}{\phi}\Delta_{\gamma} \phi dz
\end{align*}
for all $u\in W_{\gamma}^{1,2}(\Omega)$.
\end{rem}

\begin{rem}
Let $D$ be a bounded open subset of $\mathbb{R}^{m+k}$. The spectral problem we consider is of the form
\begin{align}\label{eigen}
\begin{cases}
- \Delta_{\gamma,p} \phi = \lambda |\phi|^{p-2} \phi & \text{in } D, \\
\phi = 0 & \text{on } \partial D,
\end{cases}
\end{align}
where $\lambda\in\mathbb{R}$ is the eigenvalue of the problem if (\ref{eigen}) admits a nontrivial weak solution $\phi \in W^{1,p}_{\gamma}(D)\backslash \{\phi=0\}$. In case when $\gamma=0$ in (\ref{eigen}), it is known that the first eigenvalue is positive and has an associated positive eigenfunction (see, e.g.
\cite{lindqvist1990equation, lindqvist1992addendum}). When $p=2$, (\ref{eigen}) reduces to the classical eigenvalue problem associated with the Baouendi-Grushin operator:
\begin{align*}%\label{eigen p=2}
\begin{cases}
- \Delta_{\gamma} \phi = \lambda \phi & \text{in } D, \\
\phi = 0 & \text{on } \partial D
\end{cases}
\end{align*}
with $\phi\in W^{1,2}_{\gamma}(D)\backslash\{\phi=0\}$. In \cite[Theorem 1]{xu2023nontrivial}, Xu, Chen and O'Regan showed that the spectrum of the $-\Delta_{\alpha}$-Laplacian (that recovers the Baouendi-Grushin operator $\Delta_{\gamma}$) consists of a discrete set of positive eigenvalues $\{\lambda_s\}_{s\in\mathbb{N}}$ of finite multiplicity with
\begin{align*}
0<\lambda_1<\lambda_2\leq\ldots\leq\lambda_{s}\leq\lambda_{s+1}\leq\ldots\rightarrow + \infty, \quad \text{as } s\rightarrow +\infty.
\end{align*}
Moreover, there exists a positive function $\phi_1\in W^{1,2}_{\alpha}(D)$, which is an eigenfunction corresponding to the positive eigenvalue $\lambda_{1}$. We also refer to \cite[Theorem 6.4]{monticelli2009maximum} for related results. Despite this, there is a little information regarding the spectral properties of the $p$-Grushin operator. In a very recent work, Malanchini, Bisci and Secchi \cite[Proposition 4.2]{malanchini2025bifurcation}, defined a non-decreasing sequence $\{\lambda_r\}_{r\in\mathbb{N}}$ of eigenvalues of $-\Delta_{\gamma,p}$ by using the $\mathbb{Z}_2$-cohomological index of Fadell
and Rabinowitz \cite{fadell1977generalized}. Consequently, the authors showed that the sequence $\{\lambda_r\}_{r\in\mathbb{N}}$ diverges to infinity as $r\rightarrow +\infty$ and that the first eigenvalue $\lambda_1$ of $-\Delta_{\gamma,p}$ is the smallest strictly positive eigenvalue. However, the sign of the corresponding eigenfunction $\phi_1$ appears to be unknown. Thus, we will assume that the eigenfunction $\phi_1$ (corresponding to the positive eigenvalue $\lambda_1$ of $-\Delta_{\gamma,p}$) is, in fact, strictly positive as well. 
\end{rem}

\begin{cor}\label{main cor}
Suppose that the minus Dirichlet $p$-Grushin operator $-\Delta_{\gamma,p}$ on $D$ has a positive eigenvalue $\lambda$ and a corresponding positive eigenfunction $\phi$. Then,
\begin{enumerate}
    \item for all complex-valued $u\in W^{1,p}_{\gamma}(D)$, we have
\begin{align}\label{main cor eq}
\int_{D}C_p\left(\nabla_\gamma u,\nabla_\gamma u - \frac{\nabla_\gamma \phi}{\phi} u\right)dz=\int_{D}|\nabla_{\gamma}u|^{p}dz-\lambda\int_{D}|u|^{p}dz
\end{align}
with functional $C_p(\cdot,\cdot)$ is given in (\ref{cp functional intro}).
\item Let $\lambda=\lambda_1>0$ be the first eigenvalue of $-\Delta_{\gamma,p}$ with an associated (presumed positive) eigenfunction $\phi=\phi_1$ on $D$. Then, for all complex-valued $u\in W^{1,p}_{\gamma}(D)$, we have 
\begin{align}\label{poincare lp ineq}
\int_{D}|u|^{p}dz\leq\frac{1}{\lambda_1}\int_{D}|\nabla_{\gamma}u|^{p}dz,
\end{align}
where the constant $\frac{1}{\lambda_1}$ is optimal and attained if and only if $\frac{u}{\phi_1}=\text{const}$.
\end{enumerate}
\end{cor}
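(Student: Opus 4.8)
\textbf{Proof proposal for Corollary \ref{main cor}.}

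The plan is to derive part (1) as a direct specialization of Theorem \ref{main thm} and then obtain part (2) by a short optimization argument. For part (1), I would apply the identity \eqref{main identity} on the set $D$, with the choice $\phi$ equal to the given positive Dirichlet eigenfunction of $-\Delta_{\gamma,p}$ associated with the eigenvalue $\lambda$. Since $\phi>0$ on $D$, it is non-zero and the quotient $\nabla_\gamma\phi/\phi$ is well defined; and because $\phi$ is an eigenfunction one has the weak identity $-\Delta_{\gamma,p}\phi=\lambda|\phi|^{p-2}\phi=\lambda\phi^{p-1}$. Substituting this into the last term of \eqref{main identity} gives
\begin{align*}
\int_{D}\frac{|u|^{p}}{|\phi|^{p-2}\phi}\,\Delta_{\gamma,p}\phi\,dz
=\int_{D}\frac{|u|^{p}}{\phi^{p-1}}\,\bigl(-\lambda\phi^{p-1}\bigr)\,dz
=-\lambda\int_{D}|u|^{p}\,dz,
\end{align*}
which yields \eqref{main cor eq}. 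The one point requiring care here is that \eqref{main identity} was stated for twice differentiable $\phi$, whereas an eigenfunction of the degenerate quasilinear operator $-\Delta_{\gamma,p}$ need only be a weak solution with limited regularity; so the honest way to run this step is to interpret all the integrations by parts in the proof of Theorem \ref{main thm} in the weak (distributional) sense against the test function $|u|^{p}/\phi^{p-1}$ (for $u\in C_0^\infty(D)$ first, then by density for $u\in W^{1,p}_\gamma(D)$), using only that $|\nabla_\gamma\phi|^{p-2}\nabla_\gamma\phi\in L^{p'}_{\rm loc}$ and the eigenfunction equation. I expect this regularity/density bookkeeping to be the main obstacle, everything else being algebraic substitution.

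For part (2), take $\lambda=\lambda_1$ and $\phi=\phi_1$ in \eqref{main cor eq}. By \eqref{cp functional intro} the integrand $C_p(\nabla_\gamma u,\nabla_\gamma u-\tfrac{\nabla_\gamma\phi_1}{\phi_1}u)\geq0$ pointwise, hence the left-hand side of \eqref{main cor eq} is nonnegative, which immediately gives the Poincar\'e inequality \eqref{poincare lp ineq}. For optimality of the constant $1/\lambda_1$: equality in \eqref{poincare lp ineq} forces $\int_D C_p(\cdots)\,dz=0$, and since the integrand is nonnegative it must vanish a.e.; by part (2) of Theorem \ref{main thm} this is equivalent to $u/\phi_1=\text{const}$. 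Conversely, if $u=c\,\phi_1$ then $\nabla_\gamma u-\tfrac{\nabla_\gamma\phi_1}{\phi_1}u=0$, so $C_p$ vanishes and \eqref{main cor eq} collapses to the equality $\int_D|\nabla_\gamma u|^p\,dz=\lambda_1\int_D|u|^p\,dz$. Thus the constant is attained (take $u=\phi_1$, which lies in $W^{1,p}_\gamma(D)$ by assumption), which shows it cannot be improved, and the cases of equality are exactly as claimed. The only subtlety worth flagging is that in the complex-valued setting ``$u/\phi_1=\text{const}$'' means a complex constant, and one should note that $\phi_1$ being a fixed real positive function makes $\{c\phi_1:c\in\mathbb C\}$ the full equality set; this is consistent with the scaling invariance of \eqref{poincare lp ineq}.
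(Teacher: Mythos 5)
Your proposal is correct and follows essentially the same route as the paper: for part (1) you substitute the eigenfunction equation into \eqref{main identity}, and for part (2) you drop the nonnegative $C_p$-remainder to get \eqref{poincare lp ineq} and use the vanishing characterization of $C_p$ from Theorem \ref{main thm} for the equality cases. The only minor difference is the optimality step, where you read off attainment directly from \eqref{main cor eq} at $u=\phi_1$, whereas the paper separately computes the Rayleigh quotient of $\phi_1$ by integration by parts and invokes the variational characterization of $\lambda_1$; both work, and your remark about $\phi$ being only a weak eigenfunction is a reasonable extra caution that the paper itself does not address.
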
 

Since for $\gamma=0$, it is proven that the first eigenvalue of the minus $p$-Laplacian operator $-\Delta_{p}$ is positive and its associated eigenfunction is also positive \cite{lindqvist1990equation, lindqvist1992addendum}, we have the following sharp remainder formula of the $L^{p}$-Poincar\'e inequality including the inequality with an optimal constant:

\begin{cor}
Suppose that the minus Dirichlet $p$-Laplacian operator $-\Delta_{p}$ on $D$ has a positive eigenvalue $\lambda$ and a corresponding positive eigenfunction $\phi$. Then,
\begin{enumerate}
    \item for all complex-valued $u\in W^{1,p}_{0}(D)$, we have
\begin{align*}
\int_{D}C_p\left(\nabla u,\nabla u - \frac{\nabla \phi}{\phi} u\right)dz=\int_{D}|\nabla u|^{p}dz-\lambda\int_{D}|u|^{p}dz
\end{align*}
with functional $C_p(\cdot,\cdot)$ is given in (\ref{cp functional intro}).
\item Let $\lambda=\lambda_1>0$ be the first eigenvalue of $-\Delta_{p}$ with an associated positive eigenfunction $\phi=\phi_1>0$ on $D$. Then, for all complex-valued $u\in W^{1,p}_{0}(D)$, we have 
\begin{align*}
\int_{D}|u|^{p}dz\leq\frac{1}{\lambda_1}\int_{D}|\nabla u|^{p}dz,
\end{align*}
where the constant $\frac{1}{\lambda_1}$ is optimal and attained if and only if $\frac{u}{\phi_1}=\text{const}$.
\end{enumerate}
\end{cor}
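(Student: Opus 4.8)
The plan is to deduce this corollary directly from Corollary \ref{main cor} by specialising to the case $\gamma=0$. First I would observe that when $\gamma=0$, the Baouendi-Grushin vector fields $X_i,Y_j$ reduce to the standard partial derivatives, so that $\nabla_\gamma=\nabla$, $\Delta_\gamma=\Delta$, and consequently $\Delta_{\gamma,p}=\Delta_p$, the usual $p$-Laplacian; likewise $W^{1,p}_\gamma(D)=W^{1,p}_0(D)$ with the identical norm. Thus every hypothesis and conclusion of Corollary \ref{main cor} translates verbatim, and the only thing that needs to be supplied is the \emph{existence} of a positive eigenvalue with a positive eigenfunction, which is precisely the classical result of Lindqvist \cite{lindqvist1990equation, lindqvist1992addendum} quoted in the paragraph immediately preceding the statement.

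Concretely, for part (1) I would invoke Corollary \ref{main cor}(1) with $\gamma=0$: for the given positive eigenpair $(\lambda,\phi)$ of $-\Delta_p$ on $D$ and any complex-valued $u\in W^{1,p}_0(D)$, identity \eqref{main cor eq} reads
\begin{align*}
\int_{D}C_p\left(\nabla u,\nabla u - \frac{\nabla \phi}{\phi} u\right)dz=\int_{D}|\nabla u|^{p}dz-\lambda\int_{D}|u|^{p}dz,
\end{align*}
which is the claimed equality. For part (2), I would take $\lambda=\lambda_1$ to be the first eigenvalue of $-\Delta_p$, which by \cite{lindqvist1990equation, lindqvist1992addendum} is strictly positive and admits a strictly positive eigenfunction $\phi_1$, so that Corollary \ref{main cor}(2) applies and yields both the inequality
\begin{align*}
\int_{D}|u|^{p}dz\leq\frac{1}{\lambda_1}\int_{D}|\nabla u|^{p}dz
\end{align*}
and, via the $C_p$-functional together with Theorem \ref{main thm}(2), the characterization that equality holds precisely when $u/\phi_1$ is constant (the non-negativity of $C_p$ from \eqref{cp functional intro} forces the left side of \eqref{main cor eq} to be $\geq 0$, giving the inequality, and it vanishes iff $u/\phi_1=\mathrm{const}$, giving both optimality of the constant $1/\lambda_1$ and the rigidity statement).

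Since this is a pure specialisation, there is essentially no obstacle: the substantive work has already been carried out in Theorem \ref{main thm} and Corollary \ref{main cor}. The only point that deserves a sentence of justification is why the two pieces of data required by Corollary \ref{main cor} — a positive eigenvalue and a positive eigenfunction — are genuinely available when $\gamma=0$; unlike the general $p$-Grushin case (where, as the preceding remark notes, the sign of $\phi_1$ is not known and must be assumed), for the Euclidean $p$-Laplacian this is a theorem, so the corollary is unconditional. I would therefore write the proof as two or three lines: set $\gamma=0$, note the identifications of operators and spaces, cite \cite{lindqvist1990equation, lindqvist1992addendum} for the existence of the positive first eigenpair, and apply Corollary \ref{main cor}.
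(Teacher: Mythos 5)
Your proposal is correct and follows exactly the route the paper takes: the corollary is stated as the $\gamma=0$ specialisation of Corollary \ref{main cor}, with the positivity of the first eigenvalue and eigenfunction of $-\Delta_p$ supplied by the cited results of Lindqvist, which removes the positivity assumption needed in the general $p$-Grushin case. No further comment is needed.
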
 

\begin{rem}
In the special case, when $p=2$, in (\ref{main cor eq}),  we recover the result of Suragan and the second author \cite[Corollary 1.2]{suragan2023sharp}:
\begin{align*}
\frac{1}{\lambda} \int_{D} \left| \nabla_{\gamma} u - \frac{\nabla_{\gamma} \phi}{\phi} u \right|^2dz
= \frac{1}{\lambda} \int_{D} |\nabla_{\gamma} u|^2dz - \int_{D} |u|^2dz
\end{align*}
for all $u\in W^{1,2}_{\gamma}(D)$.
\end{rem}

\begin{rem}
When $p=2$ and $\gamma=0$, the identities (\ref{main identity}) and (\ref{main cor eq}) reduce to the results of Ozawa and Suragan \cite[Theorem 2.1]{ozawa2020sharp}.
\end{rem}

\begin{rem}
We note that a version of the sharp remainder formula of the $L^{2^m}$-Poincar\'e inequality was established in \cite[Theorem 3.3]{suraganozawafred}: let $\Omega\subset\mathbb{R}^n$ be a connected domain, for which the divergence theorem is true, then we have
\begin{align}\label{fredsuragan}
&\int_{\Omega}|\nabla u|^{p_m} dz-\left(\lambda_1-\sigma_m\right) \int_{\Omega}|u|^{p_m} dz=  \sum_{j=1}^{m-1} \int_{\Omega}\left| \left| 
\nabla(u^{p_{m-j-1}}) \right|^{p_{j}}-2^{p_{j}-1}u^{p_{m-1}}  \right|^{2}dx \nonumber
\\&+\int_{\Omega}^{}\left| \nabla(u^{p_{m-1}})-\frac{\nabla \phi_1}{\phi_1}u^{p_{m-1}} \right| ^{2}dz 
\end{align}
for all $u \in C_0^1(\Omega)$. Here, $\sigma_m = \frac{1}{4} \sum_{j=1}^{m-1} 4^{p_j}$, $m \in \mathbb{N}$, $p_j = 2^j$ and $\phi_1$ is the ground state of the minus Laplacian in $\Omega$ and $\lambda_{1}$ is the corresponding eigenvalue.

By taking $p=2$ and $\gamma=0$ in (\ref{main cor eq}), we recover the $L^{2}$ case of (\ref{fredsuragan}). However, for $p=2^{m}$ (with $m=2,3,\ldots$) and $\gamma=0$, the results (\ref{main cor eq}) and (\ref{fredsuragan}) do not coincide. This is due to the fact that, for $p=2^{m}$ (with $m=2,3,\ldots$) and $\gamma=0$ in (\ref{main cor eq}), $\lambda$ and $\phi$ correspond to the eigenvalues and eigenfunctions of the nonlinear minus $2^{m}$-Laplacian, whereas, in (\ref{fredsuragan}), $\lambda_1$ and $\phi_1$ are always the eigenvalue and eigenfunction of the standard minus Laplacian.  
\end{rem}

Applying the results regarding the estimate of the remainder term, $C_p$-functional, for $p\geq2$ and $1<p<\infty$ from \cite[Step 3 of Proof of Lemma 3.4]{cazacu2024hardy} and \cite[Lemma 2.2, 2.3 and 2.4]{CT24}, respectively, we have the following corollaries:
\begin{cor}\label{cor laptev}
Let $p\geq2$ and $\lambda, \phi, D$ be from Corollary \ref{main cor}. Then, for all complex-valued $u\in W^{1,p}_{\gamma}(D)$, we have
\begin{align}\label{bobkov}
\int_{D}^{}|\nabla_{\gamma}u|^{p}dz-\lambda\int_{D}^{}|u|^{p}dz \geq c_p\int_{D}\left|\nabla_\gamma u - \frac{\nabla_\gamma \phi}{\phi} u\right|^{p}dz,
\end{align}
where
\begin{align*}%\label{laptev cp const}
c_p
= \inf_{(s,t)\in\mathbb{R}^2\setminus\{(0,0)\}}
\frac{\bigl[t^2 + s^2 + 2s + 1\bigr]^{\frac p2}-1-ps}
{\bigl[t^2 + s^2\bigr]^{\frac p2}}\in(0,1].
\end{align*}
\end{cor}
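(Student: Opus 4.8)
The plan is to derive the estimate \eqref{bobkov} directly from the identity \eqref{main cor eq} of Corollary~\ref{main cor} by bounding the $C_p$-functional from below in terms of the single quantity $\left|\nabla_\gamma u - \frac{\nabla_\gamma\phi}{\phi}u\right|^p$. Writing $\xi = \nabla_\gamma u$ and $\eta = \nabla_\gamma u - \frac{\nabla_\gamma\phi}{\phi}u$ (both $\mathbb{C}^n$-valued, pointwise in $z$), the integrand on the left of \eqref{main cor eq} is $C_p(\xi,\eta)$ as defined in \eqref{cp functional intro}, and what we want is $C_p(\xi,\eta)\geq c_p\,|\xi-\eta|^p$. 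Thus the whole statement reduces to the pointwise inequality
\begin{align*}
|\xi|^p - |\xi-\eta|^p - p\,|\xi-\eta|^{p-2}\,\mathrm{Re}\,(\xi-\eta)\cdot\overline{\eta} \;\geq\; c_p\,|\xi-\eta|^p
\qquad\text{for all }\xi,\eta\in\mathbb{C}^n,
\end{align*}
with $c_p$ the stated infimum. This is exactly the content of \cite[Step 3 of Proof of Lemma 3.4]{cazacu2024hardy} (and \cite[Lemma 2.2, 2.3]{CT24}) for $p\geq2$, so the main task is to reduce our vector-valued situation to the scalar two-parameter form in which $c_p$ is written, and to record that $c_p\in(0,1]$.

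First I would set $\zeta = \xi-\eta$ (which equals $\tfrac{\nabla_\gamma\phi}{\phi}u$) and rewrite the desired inequality as
\begin{align*}
|\zeta+\eta|^p \;\geq\; (1+c_p)\,|\zeta|^p + p\,|\zeta|^{p-2}\,\mathrm{Re}\,\zeta\cdot\overline{\eta}.
\end{align*}
Both sides are positively homogeneous of degree $p$ in $(\zeta,\eta)$, so we may normalize $|\zeta|=1$ when $\zeta\neq0$; the case $\zeta=0$ is trivial since then the inequality reads $|\eta|^p\geq c_p|0|^p+0$, i.e. $|\eta|^p\ge 0$. With $|\zeta|=1$, decompose $\eta = a\zeta + w$ where $a=\mathrm{Re}\,(\overline{\zeta}\cdot\eta)\in\mathbb{R}$ is the real component along $\zeta$ and $w$ collects the rest; a short computation gives $|\zeta+\eta|^2 = (1+a)^2 + (|\eta|^2 - a^2)$. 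Setting $s=a$ and $t^2 = |\eta|^2-a^2\ge 0$ (equivalently $t = $ the length of the part of $\eta$ not captured by the real multiple $s\zeta$ of $\zeta$), the inequality becomes precisely
\begin{align*}
\bigl[t^2+s^2+2s+1\bigr]^{p/2} \;\geq\; 1 + ps + c_p\,\bigl[t^2+s^2\bigr]^{p/2},
\end{align*}
and since $|\eta|^2 = s^2+t^2$ this is an identity relating exactly the two real parameters $(s,t)$ appearing in the formula for $c_p$. Hence the optimal constant in the vector-valued inequality coincides with the scalar infimum, and taking that infimum over $(s,t)\in\mathbb{R}^2\setminus\{(0,0)\}$ yields \eqref{bobkov} after integrating over $D$.

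It remains to justify $c_p\in(0,1]$. The upper bound $c_p\le 1$ follows by testing $t=1,s=0$: the ratio is $\bigl[2\bigr]^{p/2}-1$ over $1$... more directly one takes $s\to 0$, $t\to\infty$, where the numerator behaves like $|t|^p - ps + O(t^{p-2})$ and the denominator like $|t|^p$, giving limit $1$; since the infimum is over all nonzero $(s,t)$ this forces $c_p\le1$. For the strict positivity $c_p>0$ one must check the numerator stays bounded below by a positive multiple of the denominator; near $(s,t)=(0,0)$ a Taylor expansion of $[t^2+(1+s)^2]^{p/2}$ around $(0,0)$ gives $1+ps+\tfrac{p}{2}t^2+\tfrac{p(p-1)}{2}s^2+o(s^2+t^2)$, so the numerator is $\sim \tfrac{p}{2}(t^2+(p-1)s^2)$, which is comparably positive to the denominator $\sim (s^2+t^2)^{p/2}$ only when $p=2$; for $p>2$ the denominator is higher order and the ratio blows up, while away from the origin the numerator is continuous and strictly positive (strict convexity of $\xi\mapsto|\xi|^p$ for $p>2$ — this is where $p\ge 2$ is used), and as $|(s,t)|\to\infty$ the ratio tends to $1$, so by compactness the infimum is attained and positive. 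I expect the one genuinely delicate point to be confirming that the numerator is strictly positive for all $(s,t)\neq(0,0)$ — i.e. the strict convexity estimate $|\xi+\eta|^p > |\xi|^p + p|\xi|^{p-2}\mathrm{Re}\,\xi\cdot\overline{\eta}$ with a quantitative gap — but this is precisely what \cite[Step 3 of Proof of Lemma 3.4]{cazacu2024hardy} and \cite[Lemma 2.2]{CT24} provide, so in the write-up I would simply invoke those references and restrict the present argument to the normalization/reduction to two real parameters described above.
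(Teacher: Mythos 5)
Your overall strategy is the same as the paper's: the paper obtains Corollary \ref{cor laptev} by combining the identity \eqref{main cor eq} with the known pointwise lower bound for the $C_p$-functional from \cite[Step 3 of Proof of Lemma 3.4]{cazacu2024hardy} and \cite[Lemmas 2.2, 2.3]{CT24}, which is exactly what you do, with the reduction to the two real parameters $(s,t)$ spelled out. However, your write-up misstates the pointwise target in a way that matters. With $\xi=\nabla_\gamma u$ and $\eta=\nabla_\gamma u-\frac{\nabla_\gamma\phi}{\phi}u$, the right-hand side of \eqref{bobkov} is $c_p\int_D|\eta|^p\,dz$, so the inequality you need is $C_p(\xi,\eta)\ge c_p|\eta|^p$, \emph{not} $C_p(\xi,\eta)\ge c_p|\xi-\eta|^p$ as you state and display (recall $\xi-\eta=\frac{\nabla_\gamma\phi}{\phi}u$). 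The inequality you wrote is in fact false: letting $\eta\to0$ with $\zeta=\xi-\eta$ fixed and nonzero, the quantity $|\zeta+\eta|^p-|\zeta|^p-p|\zeta|^{p-2}\,\mathrm{Re}\,\zeta\cdot\overline{\eta}$ tends to $0$ while $c_p|\zeta|^p>0$ stays fixed. The same slip propagates into your ``rewrite'' $|\zeta+\eta|^p\ge(1+c_p)|\zeta|^p+p|\zeta|^{p-2}\,\mathrm{Re}\,\zeta\cdot\overline{\eta}$: under the normalization $|\zeta|=1$ this reads $[t^2+s^2+2s+1]^{p/2}\ge 1+c_p+ps$, which is not the scalar inequality you then claim it ``becomes''. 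Fortunately the scalar inequality you do land on, $[t^2+s^2+2s+1]^{p/2}\ge 1+ps+c_p[t^2+s^2]^{p/2}$ with $|\eta|^2=s^2+t^2$, is exactly the correct encoding of $C_p(\xi,\eta)\ge c_p|\eta|^p$ (by degree-$p$ homogeneity after normalizing $|\xi-\eta|=1$), so the fix is only to carry $|\eta|^p$ consistently; note also that the degenerate case $\xi-\eta=0$ gives $C_p=|\eta|^p\ge c_p|\eta|^p$, which uses $c_p\le1$ rather than being trivial.

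Two smaller points concerning $c_p\in(0,1]$. The place where $p\ge2$ is genuinely used is the behaviour of the ratio near $(s,t)=(0,0)$: your own Taylor expansion shows the numerator is of order $s^2+t^2$ there, so for $p>2$ the ratio blows up, for $p=2$ it is identically $1$, and for $1<p<2$ it would tend to $0$ (which is exactly why the corollary is restricted to $p\ge2$); strict convexity of $|\cdot|^p$ away from the origin holds for every $p>1$ and is not where the restriction enters, so your parenthetical attribution should be corrected. Also, ``by compactness the infimum is attained'' is neither needed nor automatic (the limiting value $1$ at infinity could itself be the infimum); positivity already follows from: ratio $\to+\infty$ (or $\equiv1$) near the origin, ratio $\to1$ uniformly as $s^2+t^2\to\infty$, and continuity together with strict positivity of the numerator on the intermediate compact annulus. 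With these corrections the argument is complete and coincides with the paper's intended proof.
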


\begin{rem}
By setting $\gamma = 0$ in equation~(\ref{bobkov}), we recover the improved Poincar\'e inequality of Bobkov and Kolonitskii \cite[Theorem 1.5, Equation (1.20)]{bobkov2023improved} with an explicit constant. While the authors observe that the constant can, in principle, be made explicit due to the hidden convexity inequality \cite[Equation (2.10)]{brasco2022comparison}, its precise value is not stated. Here, we make this constant explicit. 
\end{rem}

\begin{cor}\label{cor chinese}
Let $1<p<2\leq n$ and $\lambda, \phi, D$ be from Corollary \ref{main cor}.
\begin{enumerate}
    \item  Then, for all complex-valued $u\in W^{1,p}_{\gamma}(D)$, we have
    \begin{multline*}%\label{thm chi geq c1}
    \int_{D}^{}|\nabla_{\gamma}u|^{p}dz-\lambda\int_{D}^{}|u|^{p}dz\geq c_{1}(p)\int_{D}\left(\left|\nabla_\gamma u\right|+\left|\frac{\nabla_\gamma \phi}{\phi} u\right|\right)^{p-2}\biggl|\nabla_\gamma u - \frac{\nabla_\gamma \phi}{\phi} u\biggr|^{2}dz,
    \end{multline*}
    where $c_1(p)$ is an explicit constant defined by
    \begin{align*}%\label{thm chi c1}
c_1(p) := \inf_{s^2 + t^2 > 0} \frac{\left( t^2 + s^2 + 2s + 1 \right)^{\frac{p}{2}} - 1 - ps}{\left( \sqrt{t^2 + s^2 + 2s + 1} + 1 \right)^{p-2} (t^2 + s^2)} \in \left( 0, \frac{p(p-1)}{2p-1} \right].
\end{align*}
\item Moreover, for all complex-valued $u\in W^{1,p}_{\gamma}(D)$, the remainder term is optimal since
\begin{align*}%\label{thm chi leq c2}
\int_{D}^{}|\nabla_{\gamma}u|^{p}dz-\lambda\int_{D}^{}|u|^{p}dz\leq c_{2}(p)\int_{D}\left(\left|\nabla_\gamma u\right|+\left|\frac{\nabla_\gamma \phi}{\phi} u\right|\right)^{p-2}\biggl|\nabla_\gamma u - \frac{\nabla_\gamma \phi}{\phi} u\biggr|^{2}dz,
\end{align*}
where $c_{2}(p)$ is an explicit constant defined by
\begin{align*}%\label{thm chi c2}
c_2(p) := \sup_{s^2 + t^2 > 0} \frac{\left( t^2 + s^2 + 2s + 1 \right)^{\frac{p}{2}} - 1 - ps}{\left( \sqrt{t^2 + s^2 + 2s + 1} + 1 \right)^{p-2} (t^2 + s^2)} \in \biggl[ \frac{p}{2^{p-1}}, +\infty \biggr).
\end{align*}
\item In addition, for all complex-valued $u\in W^{1,p}_{\gamma}(D)$, we have
\begin{multline*}%\label{thm chi geq c3}
\int_{D}^{}|\nabla_{\gamma}u|^{p}dz-\lambda\int_{D}^{}|u|^{p}dz\geq c_{3}(p)\int_{D}\min\biggl\{\left|\nabla_\gamma u - \frac{\nabla_\gamma \phi}{\phi} u\right|^{p},\\ \left|\frac{\nabla_\gamma \phi}{\phi} u\right|^{p-2}\left|\nabla_\gamma u - \frac{\nabla_\gamma \phi}{\phi} u\right|^{2}\biggr\}dz,
\end{multline*}
where $c_{3}(p)$ is an explicit constant defined by
\begin{multline*}%\label{thm chi c3}
c_3(p) := \min \biggl\{
\inf_{s^2 + t^2 \geq 1} \frac{(t^2 + s^2 + 2s + 1)^{\frac{p}{2}} - 1 - ps}{(t^2 + s^2)^{\frac{p}{2}}},\\
\inf_{0 < s^2 + t^2 < 1} \frac{(t^2 + s^2 + 2s + 1)^{\frac{p}{2}} - 1 - ps}{t^2 + s^2}
\biggr\}\in \left( 0, \frac{p(p-1)}{2} \right].
\end{multline*}
\end{enumerate}
\end{cor}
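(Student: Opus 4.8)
\textbf{Proof proposal for Corollary \ref{cor chinese}.}

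The plan is to reduce all three inequalities to pointwise estimates for the $C_p$-functional and then invoke Corollary \ref{main cor} together with the pointwise bounds collected in \cite[Lemma 2.2, 2.3, 2.4]{CT24}. The starting point is identity \eqref{main cor eq}, which writes the Poincar\'e deficit $\int_D |\nabla_\gamma u|^p\,dz - \lambda\int_D |u|^p\,dz$ exactly as $\int_D C_p\bigl(\nabla_\gamma u, \nabla_\gamma u - \frac{\nabla_\gamma\phi}{\phi}u\bigr)\,dz$. Hence everything hinges on lower and upper bounds, at almost every point $z$, for $C_p(\xi,\eta)$ with the specific choice $\xi = \nabla_\gamma u(z)$ and $\eta = \nabla_\gamma u(z) - \frac{\nabla_\gamma\phi(z)}{\phi(z)}u(z)$; note $\xi - \eta = \frac{\nabla_\gamma\phi}{\phi}u$, so $|\xi-\eta|$ is exactly $\bigl|\frac{\nabla_\gamma\phi}{\phi}u\bigr|$, the quantity appearing on the right-hand sides.

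First I would record, for $1<p<2$, the homogeneity of $C_p$: since $C_p(\lambda\xi,\lambda\eta) = |\lambda|^p C_p(\xi,\eta)$, after dividing by $|\xi-\eta|^p$ (the nondegenerate case) one may set $|\xi-\eta|=1$, write $\xi-\eta$ as a unit vector, and reduce to a two-real-parameter expression by resolving $\eta$ into its component along $\xi-\eta$ and the orthogonal part; this is precisely how the scalars $s$ (the component of $\eta$ along $\overline{(\xi-\eta)}$, up to sign) and $t$ (the transverse magnitude) enter, and it yields $|\xi| = \sqrt{t^2+s^2+2s+1}$ and $|\eta| = \sqrt{t^2+s^2}$. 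With this normalization the three quantities $c_1(p), c_2(p), c_3(p)$ are exactly the extremal values of $C_p(\xi,\eta)$ divided, respectively, by $\bigl(|\xi|+|\xi-\eta|\bigr)^{p-2}|\eta|^2$, by the same expression, and by $\min\{|\eta|^p, |\xi-\eta|^{p-2}|\eta|^2\}$. So parts (1), (2), (3) follow by integrating the pointwise inequalities $C_p(\xi,\eta) \ge c_1(p)\,(|\xi|+|\xi-\eta|)^{p-2}|\eta|^2$, $C_p(\xi,\eta)\le c_2(p)\,(|\xi|+|\xi-\eta|)^{p-2}|\eta|^2$, and $C_p(\xi,\eta)\ge c_3(p)\min\{|\eta|^p,|\xi-\eta|^{p-2}|\eta|^2\}$ over $D$, substituting $\xi-\eta = \frac{\nabla_\gamma\phi}{\phi}u$, and applying \eqref{main cor eq}. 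For part (3) the case split $s^2+t^2\ge 1$ versus $0<s^2+t^2<1$ corresponds exactly to whether $|\eta|\ge|\xi-\eta|$ or not, which selects which term realizes the minimum.

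The genuinely nontrivial inputs are the two-sided pointwise bounds for $C_p$ in the range $1<p<2$ — in particular the positivity and finiteness of $c_1(p)$ and the finiteness of $c_2(p)$ — and the explicit numerical envelopes $c_1(p)\in\bigl(0,\frac{p(p-1)}{2p-1}\bigr]$, $c_2(p)\in\bigl[\frac{p}{2^{p-1}},+\infty\bigr)$, $c_3(p)\in\bigl(0,\frac{p(p-1)}{2}\bigr]$. These are quoted verbatim from \cite[Lemma 2.2, 2.3, 2.4]{CT24}, so I would not reprove them; the main obstacle in writing the proof is instead bookkeeping: verifying that the reduction to $(s,t)$ is legitimate on the degenerate set where $\xi-\eta = 0$ (there $C_p(\xi,\eta) = C_p(\xi,\xi) = -p|0|^{p-2}\cdots$, which must be interpreted as $0$, consistently with the right-hand sides, which also vanish there), and checking that the hypothesis $n\ge 2$ is used only insofar as it is the standing assumption in \cite{CT24} (it plays no role beyond guaranteeing the ambient vector spaces have dimension at least two so the transverse parameter $t$ is meaningful). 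Once the pointwise inequalities are in hand, each of (1)–(3) is one line: integrate and apply \eqref{main cor eq}.
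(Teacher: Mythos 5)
Your proposal follows essentially the same route as the paper, which obtains Corollary \ref{cor chinese} exactly by combining the remainder identity \eqref{main cor eq} with the pointwise two-sided bounds on the $C_p$-functional from \cite[Lemmas 2.2, 2.3, 2.4]{CT24}, including the identification $\xi-\eta=\frac{\nabla_\gamma\phi}{\phi}u$ and the normalization $|\xi-\eta|=1$ that produces the $(s,t)$-ratios defining $c_1(p)$, $c_2(p)$, $c_3(p)$. One small correction to your parenthetical about the degenerate set where $\frac{\nabla_\gamma\phi}{\phi}u=0$ but $\nabla_\gamma u\neq 0$: there $C_p(\xi,\xi)=|\xi|^p$ (only the term $p|\xi-\eta|^{p-2}\,\mathrm{Re}\,(\xi-\eta)\cdot\overline{\eta}$ is interpreted as $0$) and the right-hand integrands equal $c_1(p)|\nabla_\gamma u|^p$, $c_2(p)|\nabla_\gamma u|^p$ and $c_3(p)|\nabla_\gamma u|^p$ respectively, so neither side vanishes; the pointwise inequalities still hold at such points because $c_1(p),c_3(p)\le 1\le c_2(p)$, which is exactly the $s^2+t^2\to\infty$ limit of the normalized ratios, not because both sides are zero.
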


Before proving Theorem \ref{main thm}, we first need to prove the following complex-valued version of Picone's identity for Baouendi-Grushin vector fields:

\begin{thm}\label{thm1}
Let $u$ be a complex-valued function on $\Omega \subset \mathbb{R}^{m+k}$ and $\phi$ be a non-zero complex-valued function on $\Omega \subset \mathbb{R}^{m+k}$. Then, we have
\begin{align*}
&C_p(\xi, \eta) = |\nabla_\gamma u|^p + (p-1) \left| \frac{\nabla_\gamma \phi}{\phi} u \right|^p - p \, \textnormal{Re} \left[ \left| \frac{\nabla_\gamma \phi}{\phi} u \right|^{p-2} \frac{u}{\phi}(\nabla_\gamma \phi \cdot \overline{\nabla_\gamma u}) \right],
\\&R_p(\xi, \eta) = |\nabla_\gamma u|^p - |\nabla_\gamma \phi|^{p-2} \nabla_\gamma \left( \frac{|u|^p}{|\phi|^{p-2} \phi} \right) \cdot \nabla_\gamma \phi
\end{align*}
and
\begin{align*}
C_p(\xi,\eta)=R_p(\xi,\eta)\geq0,
\end{align*}
where $C_p(\cdot,\cdot)$ is given in (\ref{cp functional intro}) and
\begin{align}\label{notation}
\xi := \nabla_\gamma u, \quad \eta := \nabla_\gamma u - \frac{\nabla_\gamma \phi}{\phi} u.
\end{align}
\end{thm}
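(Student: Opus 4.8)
The plan is to derive Theorem~\ref{thm1} from a single pointwise computation at any point where $u,\phi$ are differentiable and $\phi\neq0$, exploiting that each field $X_i,Y_j$ in \eqref{grushin vector fields} is a first-order derivation, so $\nabla_\gamma$ obeys the usual Leibniz and chain rules componentwise; the inequality will come from convexity of $\zeta\mapsto|\zeta|^{p}$.

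\emph{Step 1 (the formula for $C_p$).} With $\xi,\eta$ as in \eqref{notation} we have $\xi-\eta=\dfrac{\nabla_\gamma\phi}{\phi}u$, hence $|\xi-\eta|=\left|\dfrac{\nabla_\gamma\phi}{\phi}u\right|$ and $\overline\eta=\overline{\nabla_\gamma u}-\dfrac{\overline{\nabla_\gamma\phi}}{\overline\phi}\overline u$. Substituting into \eqref{cp functional intro} and expanding the inner product,
\begin{align*}
\textnormal{Re}\,(\xi-\eta)\cdot\overline\eta=\textnormal{Re}\!\left[\frac{u}{\phi}\,\nabla_\gamma\phi\cdot\overline{\nabla_\gamma u}\right]-\left|\frac{\nabla_\gamma\phi}{\phi}u\right|^{2},
\end{align*}
so $-p|\xi-\eta|^{p-2}\textnormal{Re}(\xi-\eta)\cdot\overline\eta=-p\,\textnormal{Re}\!\left[\left|\frac{\nabla_\gamma\phi}{\phi}u\right|^{p-2}\frac{u}{\phi}(\nabla_\gamma\phi\cdot\overline{\nabla_\gamma u})\right]+p\left|\frac{\nabla_\gamma\phi}{\phi}u\right|^{p}$. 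Adding $|\xi|^{p}-|\xi-\eta|^{p}=|\nabla_\gamma u|^{p}-\left|\frac{\nabla_\gamma\phi}{\phi}u\right|^{p}$ gives the first displayed identity; this is purely algebraic.

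\emph{Step 2 (the Picone identity $C_p=R_p$).} Here I expand $R_p$ and match it with Step~1. Set $w:=\dfrac{|u|^{p}}{|\phi|^{p-2}\phi}$. From $\nabla_\gamma|u|^{p}=p|u|^{p-2}\textnormal{Re}(\overline u\,\nabla_\gamma u)$, $\nabla_\gamma|\phi|^{2-p}=-(p-2)|\phi|^{-p}\textnormal{Re}(\overline\phi\,\nabla_\gamma\phi)$ and $\nabla_\gamma(\phi^{-1})=-\phi^{-2}\nabla_\gamma\phi$, the Leibniz rule applied to $w=|u|^{p}\cdot|\phi|^{2-p}\cdot\phi^{-1}$ yields
\begin{align*}
\nabla_\gamma w=p\,\frac{|u|^{p-2}\textnormal{Re}(\overline u\,\nabla_\gamma u)}{|\phi|^{p-2}\phi}-(p-2)\,\frac{|u|^{p}\textnormal{Re}(\overline\phi\,\nabla_\gamma\phi)}{|\phi|^{p}\phi}-\frac{|u|^{p}}{|\phi|^{p-2}\phi^{2}}\,\nabla_\gamma\phi.
\end{align*}
One then forms $|\nabla_\gamma\phi|^{p-2}\nabla_\gamma w\cdot\nabla_\gamma\phi$, rewrites the scalar products via $\textnormal{Re}(\overline u\,\nabla_\gamma u)=\tfrac{1}{2}(\overline u\,\nabla_\gamma u+u\,\overline{\nabla_\gamma u})$ and $\textnormal{Re}(\overline\phi\,\nabla_\gamma\phi)=\tfrac{1}{2}(\overline\phi\,\nabla_\gamma\phi+\phi\,\overline{\nabla_\gamma\phi})$, and collects terms: the pieces proportional to $|u|^{p}$ recombine into $(p-1)\left|\frac{\nabla_\gamma\phi}{\phi}u\right|^{p}$, and the piece involving $\nabla_\gamma u$ into $p\,\textnormal{Re}\!\left[\left|\frac{\nabla_\gamma\phi}{\phi}u\right|^{p-2}\frac{u}{\phi}(\nabla_\gamma\phi\cdot\overline{\nabla_\gamma u})\right]$. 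Subtracting from $|\nabla_\gamma u|^{p}$ reproduces exactly the expression for $C_p(\xi,\eta)$ of Step~1, so $R_p=C_p$. I expect this to be the main obstacle: the complex conjugates and the $\textnormal{Re}$'s must be tracked carefully so that the several degree-$p$ homogeneous terms in $\nabla_\gamma\phi/\phi$ collapse into the single $(p-1)$-term and the cross term comes out with the correct constant $p$. (When $\phi$ is real and positive, as in Corollary~\ref{main cor}, this reduces to the classical Picone identity for the $p$-Grushin operator and the computation is shorter.)

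\emph{Step 3 (nonnegativity).} The inequality $C_p(\xi,\eta)\geq0$ is precisely \eqref{cp functional intro}: with $F(\zeta)=|\zeta|^{p}$ on $\mathbb C^{m+k}$, which is convex for $1<p<\infty$, one has $C_p(\xi,\eta)=F(\xi)-F(\xi-\eta)-p|\xi-\eta|^{p-2}\textnormal{Re}(\xi-\eta)\cdot\overline\eta$, i.e.\ the gap at $\xi=(\xi-\eta)+\eta$ between $F$ and its supporting hyperplane at $\xi-\eta$, which is $\geq0$. (The equality case, used for part~(2) of Theorem~\ref{main thm}, follows from strict convexity of $F$ away from the origin and is not needed here.) Combining Steps~1--3 proves Theorem~\ref{thm1}.
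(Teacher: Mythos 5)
Your proposal is correct in substance and follows essentially the same route as the paper's own proof: Step 1 is exactly the paper's algebraic expansion of $C_p(\xi,\eta)$ after substituting $\xi-\eta=\frac{\nabla_\gamma\phi}{\phi}u$, Step 2 is the paper's expansion of $R_p$ via the Leibniz/chain rule applied to $\frac{|u|^p}{|\phi|^{p-2}\phi}$ followed by term matching, and Step 3 supplies the nonnegativity of $C_p$ (which the paper takes from the cited references) by the standard convexity/supporting-hyperplane argument for $\zeta\mapsto|\zeta|^p$.

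One caveat, which you yourself flag as ``the main obstacle'' but do not resolve: in Step 2 you assert, rather than verify, that after forming $|\nabla_\gamma\phi|^{p-2}\nabla_\gamma w\cdot\nabla_\gamma\phi$ the terms proportional to $|u|^p$ recombine into $(p-1)\left|\frac{\nabla_\gamma\phi}{\phi}u\right|^p$ and the cross term acquires the factor $p$. With your (correctly stated) formulas, which keep the real parts in $\nabla_\gamma|u|^p$ and $\nabla_\gamma|\phi|^{2-p}$, this recombination goes through when $\phi$ is real-valued (in particular for the positive eigenfunction used in Corollary \ref{main cor}), because then $\mathrm{Re}(\overline\phi\,\nabla_\gamma\phi)\cdot\nabla_\gamma\phi/\phi=|\nabla_\gamma\phi|^2/\phi$ and $\phi^{-2}\nabla_\gamma\phi\cdot\nabla_\gamma\phi=|\nabla_\gamma\phi/\phi|^2$; for genuinely complex $\phi$ the quantity $R_p$ need not even be real (e.g.\ $p=2$, $u\equiv1$, $\phi=e^{ix}$ gives $R_2=-1$ while $C_2=1$), so the collection cannot produce $C_p$. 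The paper's written computation makes the identical implicit use of realness at the same spot (it differentiates $|\phi|^{2-p}\phi^{-1}$ without the real part and identifies $\phi^{-2}\nabla_\gamma\phi\cdot\nabla_\gamma\phi$ with $\left|\nabla_\gamma\phi/\phi\right|^2$), so relative to the paper your argument is not deficient; but if you want your Step 2 to stand on its own for the theorem as literally stated, you should either carry out the collection explicitly under the assumption that $\phi$ is real (positive), or note that this is the case in which the identity $C_p=R_p$ is actually used.
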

\begin{proof}[Proof of Theorem \ref{thm1}] Using the notation (\ref{notation}) in formula (\ref{cp functional intro}), we get
\allowdisplaybreaks
\begin{align*}
C_{p}(\xi,\eta)&=|\nabla_{\gamma}u|^{p}-\left|\frac{\nabla_{\gamma}\phi}{\phi}u\right|^{p}-p\left|\frac{\nabla_{\gamma}\phi}{\phi}u\right|^{p-2}\text{Re}\left(\frac{\nabla_{\gamma}\phi}{\phi}u\right)\cdot \overline{\nabla_{\gamma}u-\frac{\nabla_{\gamma}\phi}{\phi}u}
\\&=|\nabla_{\gamma}u|^{p}-\left|\frac{\nabla_{\gamma}\phi}{\phi}u\right|^{p}-p\text{Re}\left| \frac{\nabla_{\gamma}\phi}{\phi}u \right| ^{p-2}\frac{\nabla_{\gamma}\phi}{\phi}u\cdot \overline{\nabla_{\gamma}u}+p\left| \frac{\nabla_{\gamma}\phi}{\phi}u \right|^{p}
\\&=|\nabla_{\gamma}u|^{p}+(p-1)\left|\frac{\nabla_{\gamma}\phi}{\phi}u\right|^{p}-p\text{Re}\left| \frac{\nabla_{\gamma}\phi}{\phi}u \right| ^{p-2}\frac{u}{\phi}\left( \nabla_{\gamma}\phi \cdot \overline{\nabla_{\gamma}u} \right).
\end{align*}
Now let us define  
\begin{align}\label{R_p}
R_{p}(\xi,\eta):=|\nabla_{\gamma}u|^{p}-\left| \nabla_{\gamma}\phi \right| ^{p-2}\nabla_{\gamma}\left( \frac{|u|^{p}}{|\phi|^{p-2}\phi} \right) \nabla_{\gamma}\phi.
\end{align}
Expanding (\ref{R_p}), we have
\allowdisplaybreaks
\begin{align*}
R_{p}(\xi,\eta)&=|\nabla_{\gamma}u|^{p}-|\nabla_{\gamma}\phi|^{p-2}\Biggl[\frac{p\text{Re}|u|^{p-2}u\overline{\nabla_{\gamma}u}}{|\phi|^{p-2}\phi}+|u|^{p}\nabla_{\gamma}(|\phi|^{2-p}\phi^{-1})\Biggr]\nabla_{\gamma}\phi
\\&=|\nabla_{\gamma}u|^{p}-p\text{Re}\left| \frac{\nabla_{\gamma}\phi}{\phi}u \right| ^{p-2}\frac{u}{\phi}\left( \nabla_{\gamma}\phi \cdot \overline{\nabla_{\gamma}u} \right)
\\&-|\nabla_{\gamma}\phi|^{p-2}\left[|u|^{p}\nabla_{\gamma}(|\phi|^{2-p}\phi^{-1})\right]\nabla_{\gamma}\phi
\\&=|\nabla_{\gamma}u|^{p}-p\text{Re}\left| \frac{\nabla_{\gamma}\phi}{\phi}u \right| ^{p-2}\frac{u}{\phi}\left( \nabla_{\gamma}\phi \cdot \overline{\nabla_{\gamma}u} \right)
\\&-|\nabla_{\gamma}\phi|^{p-2}|u|^{p}\left[(2-p)|\phi|^{-p}\phi(\overline{\nabla_{\gamma}\phi})\phi^{-1}+|\phi|^{2-p}(-1)\phi^{-2}\nabla_{\gamma}\phi\right]\nabla_{\gamma}\phi
\\&=|\nabla_{\gamma}u|^{p}-p\text{Re}\left| \frac{\nabla_{\gamma}\phi}{\phi}u \right| ^{p-2}\frac{u}{\phi}\left( \nabla_{\gamma}\phi \cdot \overline{\nabla_{\gamma}u} \right)+(p-2)\left|\frac{\nabla_{\gamma}\phi}{\phi}u\right|^{p}+\left|\frac{\nabla_{\gamma}\phi}{\phi}u\right|^{p}
\\&=|\nabla_{\gamma}u|^{p}+(p-1)\left|\frac{\nabla_{\gamma}\phi}{\phi}u\right|^{p}-p\text{Re}\left| \frac{\nabla_{\gamma}\phi}{\phi}u \right| ^{p-2}\frac{u}{\phi}\left( \nabla_{\gamma}\phi \cdot \overline{\nabla_{\gamma}u} \right)=C_{p}(\xi,\eta).
\end{align*}
Since $C_p(\xi,\eta)\geq0$ and $C_p(\xi,\eta)=R_p(\xi,\eta)$, this implies that $R_p(\xi,\eta)\geq 0$. 
\end{proof}

Now we are ready to prove Theorem \ref{main thm}.

\begin{proof}[Proof of Theorem \ref{main thm}] Integrating $R_p(\xi,\eta)$ over $\Omega$, we get
\begin{align}\label{intRp0}
\int_{\Omega}R_p(\xi, \eta)dz = \int_{\Omega}|\nabla_\gamma u|^pdz - \int_{\Omega}|\nabla_\gamma \phi|^{p-2} \nabla_\gamma \left( \frac{|u|^p}{|\phi|^{p-2} \phi} \right) \cdot \nabla_\gamma \phi dz \geq 0.
\end{align}
Now let us denote
\begin{align}\label{X}
X := \frac{|u|^{p}}{|\phi|^{p-2}\phi}|\nabla_{\gamma}\phi|^{p-2}\nabla_{\gamma}\phi.
\end{align}
Taking the divergence on both sides in (\ref{X}), we have
\begin{align}\label{divX}
\text{div}_{\nabla_{\gamma}}X = \nabla_{\gamma}\left(\frac{|u|^{p}}{|\phi|^{p-2}\phi}\right)|\nabla_{\gamma}\phi|^{p-2}\nabla_{\gamma}\phi+\frac{|u|^{p}}{|\phi|^{p-2}\phi}\Delta_{\gamma,p}\phi.
\end{align}
Rewriting (\ref{divX}):
\begin{align}\label{key part}
\nabla_{\gamma}\left(\frac{|u|^{p}}{|\phi|^{p-2}\phi}\right)|\nabla_{\gamma}\phi|^{p-2}\nabla_{\gamma}\phi = \nabla_{\gamma}\cdot\left(\frac{|u|^{p}}{|\phi|^{p-2}\phi}|\nabla_{\gamma}\phi|^{p-2}\nabla_{\gamma}\phi\right) - \frac{|u|^{p}}{|\phi|^{p-2}\phi}\Delta_{\gamma,p}\phi.
\end{align}
Substituting (\ref{key part}) in (\ref{intRp0}), we obtain
\begin{multline*}
\int_{\Omega}R_p(\xi, \eta)dz = \int_{\Omega}|\nabla_\gamma u|^pdz - \int_{\Omega}\nabla_{\gamma}\cdot\left(\frac{|u|^{p}}{|\phi|^{p-2}\phi}|\nabla_{\gamma}\phi|^{p-2}\nabla_{\gamma}\phi\right)dz
\\+ \int_{\Omega}\frac{|u|^{p}}{|\phi|^{p-2}\phi}\Delta_{\gamma,p}\phi dz.
\end{multline*}
Using the divergence formula with the fact that $u$ vanishes on the boundary of $\Omega$, we get
\begin{align*}
\int_{\Omega}R_p(\xi, \eta)dz = \int_{\Omega}|\nabla_\gamma u|^pdz+\int_{\Omega}\frac{|u|^{p}}{|\phi|^{p-2}\phi}\Delta_{\gamma,p}\phi dz.
\end{align*}
Since $R_p(\xi,\eta)=C_p(\xi,\eta)$, we have
\begin{align*}
\int_{\Omega}C_p(\xi, \eta)dz = \int_{\Omega}|\nabla_\gamma u|^pdz+\int_{\Omega}\frac{|u|^{p}}{|\phi|^{p-2}\phi}\Delta_{\gamma,p}\phi dz.
\end{align*}
Recalling from \cite[Step 3 of Proof of
Lemma 3.4]{cazacu2024hardy} along with recent results from \cite[Lemma 2.2, 2.3]{CT24}, for $1<p<\infty$, we have
\begin{align*}
C_{p}(\xi,\eta)=0 \iff \eta = \nabla_{\gamma}u - \frac{\nabla_{\gamma}\phi}{\phi}u = 0.
\end{align*}
Taking into account that $\phi\neq0$, we get
\begin{align*}
0=\nabla_{\gamma}u-\frac{\nabla_{\gamma}\phi}{\phi}u=\nabla_{\gamma}\left(\frac{u}{\phi}\right)\phi \iff \nabla_{\gamma}\left(\frac{u}{\phi}\right)\phi = 0 \iff \frac{u}{\phi}=\text{const}.
\end{align*}
Thus, for $1<p<\infty$, the $C_p$-functional vanishes if and only if $\frac{u}{\phi}=\text{const}$. 
\end{proof}

Next, we will proceed with the proof of Corollary \ref{main cor}.

\begin{proof}[Proof of Corollary \ref{main cor}]
Since we assume that the operator $-\Delta_{\gamma,p}$ has a positive eigenvalue $\lambda$ and a corresponding positive eigenfunction $\phi$ on $D$, we have that the pair $(\lambda, \phi)$ satisfies (\ref{eigen}), i.e.,
\begin{align}\label{spec2}
\begin{cases}
- \Delta_{\gamma,p} \phi = \lambda |\phi|^{p-2} \phi & \text{in } D, \\
\phi = 0 & \text{on } \partial D.
\end{cases}
\end{align}
Substituting (\ref{spec2}) to (\ref{main identity}), we have
\begin{align}\label{cp-part}
\int_{D}C_p\left(\nabla_\gamma u,\nabla_\gamma u - \frac{\nabla_\gamma \phi}{\phi} u\right)dz=\int_{D}|\nabla_{\gamma}u|^{p}dz-\lambda\int_{D}|u|^{p}dz,
\end{align}
giving us Part (1). Now for Part (2), we first let $\lambda=\lambda_1>0$ to be the first eigenvalue of $-\Delta_{\gamma,p}$ and $\phi=\phi_1$ be an associated (presumed positive) eigenfunction on $D$. Then, from (\ref{cp-part}), we obtain
\begin{align}\label{cp-part-1}
\int_{D}C_p\left(\nabla_\gamma u,\nabla_\gamma u - \frac{\nabla_\gamma \phi_1}{\phi_1} u\right)dz=\int_{D}|\nabla_{\gamma}u|^{p}dz-\lambda_1\int_{D}|u|^{p}dz
\end{align}
Dropping the remainder term in (\ref{cp-part-1}) and dividing both sides by $\lambda_1>0$, we get
\begin{align}\label{lam1poin}
\int_{D}|u|^{p}dz\leq\frac{1}{\lambda_1}\int_{D}|\nabla_{\gamma}u|^{p}dz,
\end{align}
where
\begin{align}\label{lam1formula}
\lambda_1=
\min_{u\neq0}
\frac{\int_{D}|\nabla_{\gamma}u|^{p}\,dz}{\int_{D}|u|^{p}\,dz}
\end{align}
by \cite[Proposition 4.2]{malanchini2025bifurcation}. To prove that $\frac{1}{\lambda_1}$ is optimal, we define the optimal constant in the $L^{p}$-Poincar\'e inequality as follows:
\begin{align*}
C'=\inf \left\{ M > 0 : \|u\|_{L^p(D)} \leq M \|\nabla_\gamma u\|_{L^p(D)}\right\}
= \sup_{u \neq 0} \frac{\|u\|_{L^p}}{\|\nabla_\gamma u\|_{L^p}}.
\end{align*}
We need to show that $C'=\frac{1}{\lambda_1^{1/p}}$. Taking the infimum over all admissible constants, in (\ref{lam1poin}), we get
\begin{align*}
C'\leq\frac{1}{\lambda_1^{1/p}}.
\end{align*}
Now we note that since $(\lambda_{1}, \phi_{1})$ satisfies (\ref{spec2}), we have
\begin{multline*}
\lambda_{1}\int_{\Omega}|\phi_1|^{p}dz=-\int_{\Omega}\phi_{1}\Delta_{\gamma,p}\phi_{1}dz=-\int_{\Omega}\phi_{1}\text{div}_{\nabla_{\gamma}}\left(|\nabla_{\gamma}\phi_{1}|^{p-2}\nabla_{\gamma}\phi_{1}\right)dz\\=\int_{\Omega}|\nabla_{\gamma}\phi_{1}|^{p}dz,
\end{multline*}
which gives us the following relation:
\begin{align*}
\lambda^{1/p}_{1}=\frac{\|\nabla_{\gamma}\phi_{1}\|_{L^{p}}}{\|\phi_{1}\|_{L^{p}}} \iff \frac{1}{\lambda^{1/p}_1}=\frac{\|\phi_{1}\|_{L^{p}}}{\|\nabla_{\gamma}\phi_{1}\|_{L^{p}}}.
\end{align*}
On the other hand, we have
\begin{align*}
C'=\sup_{u \neq 0} \frac{\|u\|_{L^p}}{\|\nabla_\gamma u\|_{L^p}}\geq\frac{\|\phi_{1}\|_{L^{p}}}{\|\nabla_{\gamma}\phi_{1}\|_{L^{p}}}=\frac{1}{\lambda^{1/p}_1}.
\end{align*}
Therefore, $C'=\frac{1}{\lambda^{1/p}_1}$. The attainability of the constant follows directly from the fact that $C_p$-functional, in (\ref{cp-part-1}), vanishes if and only if $\frac{u}{\phi_1}=\text{const}$.
\end{proof}

\section{Applications}\label{applications}

In this section, we investigate the occurrence of finite-time blow-up and the conditions for global existence of positive solutions to the initial-boundary problem associated with the doubly nonlinear PME involving the $p$-Grushin operator $\Delta_{\gamma, p}$. As a result, we extend \cite[Theorem 1.3 and 1.6]{dukenbayeva2024global} from $p=2$ to $1<p<\infty$. We also refer to \cite{sabitbek2024global}
for similar results when $\gamma=0$.

\subsection{Blow-up solutions of the doubly nonlinear PME} First, we start with the blow-up property.

\begin{thm}\label{app thm1}
Suppose that
\begin{align}\label{cond1pde}
\alpha F(u) \leq u^{\ell}f(u) + \beta u^{p\ell} + \alpha \theta, \quad u > 0,
\end{align}
where
\begin{align*}
F(u) = \frac{p\ell}{\ell + 1} \int_0^u s^{\ell - 1} f(s) \, ds, \quad \ell \geq 1,    
\end{align*}
for some
\begin{align*}
\theta > 0, \quad 0 < \beta \leq \lambda_{1}  \frac{\alpha - \ell - 1}{\ell + 1}  \quad \text{and} \quad \alpha > \ell + 1,  
\end{align*}
where $\lambda_1$ is the first eigenvalue of $-\Delta_{\gamma,p}$. Let the initial data \( u_0 \in L^\infty(D) \cap W_{\gamma}^{1,p}(D) \) satisfy
\begin{align}\label{j0}
J_0 := -\frac{1}{\ell + 1} \int_D |\nabla_{\gamma} u_0^{\ell}|^{p}dz + \int_D \left(F(u_0) - \theta\right)dz > 0.
\end{align}
Then, any positive solution \( u \) of the problem (\ref{pde}) blows up in finite time \( T^* \). That is, there exists
\begin{align}\label{M}
0 < T^* \leq \frac{M}{\sigma \int_D u_0^{\ell+1}dz}
\end{align}
such that
\begin{align*}
\lim_{t \to T^*} \int_0^t \int_D u^{\ell + 1}dzd\tau = +\infty, 
\end{align*}
where \( M > 0 \) and \( \sigma = \frac{\sqrt{p\ell \alpha}}{\ell + 1} - 1 > 0 \). In fact, in (\ref{M}),  we can take
\begin{align*}
M = \frac{(1 + \sigma)(1 + 1/\sigma) \left( \int_D u_0^{\ell + 1} dz\right)^2}{\alpha (\ell + 1) J_0}.    
\end{align*}
\end{thm}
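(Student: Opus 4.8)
The plan is to follow the classical concavity (Kaplan-type / Levine-type) argument adapted to the doubly nonlinear setting, using the $L^p$-Poincar\'e inequality \eqref{poincare lp ineq} as the key spectral input. First I would introduce the natural energy functional associated to \eqref{pde},
\begin{align*}
E(t) := \frac{1}{\ell+1}\int_D |\nabla_\gamma u^{\ell}|^p\,dz - \int_D \bigl(F(u)-\theta\bigr)\,dz,
\end{align*}
so that $J_0 = -E(0) > 0$ by hypothesis \eqref{j0}, and compute its time derivative along solutions. Multiplying the equation $u_t-\Delta_{\gamma,p}(u^\ell)=f(u)$ by $(u^\ell)_t$, integrating over $D$, and using the Dirichlet boundary condition together with the divergence formula, I expect to obtain an identity of the schematic form $E'(t) = -\ell\int_D u^{\ell-1}u_t^2\,dz \le 0$ (up to the precise normalizing constants coming from $F$), so that $E$ is nonincreasing and hence $-E(t)\ge J_0>0$ for all $t$ in the existence interval.

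Next I would set up the auxiliary function on which the concavity method operates, namely something like
\begin{align*}
\mathcal A(t) := \int_0^t \int_D u^{\ell+1}\,dz\,d\tau + (T^*-t)\int_D u_0^{\ell+1}\,dz + \text{(a linear-in-}t\text{ correction)},
\end{align*}
chosen so that $\mathcal A(t)>0$, $\mathcal A'(t)$ is controlled by $\int_D u^{\ell+1}$, and $\mathcal A''(t)$ can be expanded using the equation. The heart of the matter is to prove a differential inequality $\mathcal A\,\mathcal A'' - (1+\epsilon)(\mathcal A')^2 \ge 0$ for a suitable $\epsilon>0$ (here $\epsilon$ will be built from $\sigma = \frac{\sqrt{p\ell\alpha}}{\ell+1}-1>0$). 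To get the lower bound on $\mathcal A''$ one differentiates $\mathcal A'$, substitutes the PDE, integrates by parts to produce $-\int_D|\nabla_\gamma u^\ell|^p$ and $\int_D u^\ell f(u)$, then invokes the structural condition \eqref{cond1pde} to replace $\int_D u^\ell f(u)$ by a combination of $F(u)$, $u^{p\ell}$ and $\theta$; the term $\int_D u^{p\ell}\,dz = \int_D |u^\ell|^p\,dz$ is then bounded from above using the Poincar\'e inequality $\int_D|u^\ell|^p\,dz \le \lambda_1^{-1}\int_D|\nabla_\gamma u^\ell|^p\,dz$, and the condition $0<\beta\le \lambda_1\frac{\alpha-\ell-1}{\ell+1}$ is exactly what makes the resulting coefficients line up so that $-E(t)\ge J_0$ can be fed back in with a strictly positive multiplier. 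A Cauchy–Schwarz inequality (in the mixed space-time integral) is what produces the cross term $(\mathcal A')^2$ and fixes the constant $1+\epsilon$; this is where the precise value of $\sigma$ and the $(1+\sigma)(1+1/\sigma)$ factor in $M$ emerge.

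Once the concavity inequality $\bigl(\mathcal A^{-\epsilon}\bigr)'' \le 0$ is established, the standard conclusion is that $\mathcal A^{-\epsilon}$, being positive, concave and with negative initial slope (guaranteed by $J_0>0$ and the choice of the linear correction term), must reach zero in finite time; solving the elementary scalar inequality gives the explicit bound \eqref{M} with the stated $M$, and $\mathcal A(t)\to+\infty$ forces $\int_0^t\int_D u^{\ell+1}\,dz\,d\tau \to +\infty$ as $t\to T^*$, which is the asserted blow-up. I expect the main obstacle to be the bookkeeping in the second step: differentiating $\mathcal A$ twice, correctly handling the factor $u^{\ell-1}$ weights that appear when one differentiates $u^{\ell+1}$ and $u^\ell$ in time, and matching all constants so that the hypotheses on $\alpha,\beta,\theta$ are used sharply enough to close the inequality — in particular verifying that $\sigma>0$ is both necessary and sufficient for the Cauchy–Schwarz step to yield an exponent strictly greater than $1$. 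The regularity/existence of a local positive solution in $L^\infty\cap W^{1,p}_\gamma$ on which all these integrations by parts are licit should be quoted from the parabolic theory (e.g.\ along the lines of \cite{vazquez2007porous} and the references on the doubly nonlinear PME), rather than reproved here.
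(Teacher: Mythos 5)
Your plan is essentially the paper's proof: the paper runs exactly this concavity (Levine-type) argument, with the auxiliary function $E(t)=\int_0^t\int_D u^{\ell+1}\,dz\,d\tau+M$ (a constant $M$ in place of your $(T^*-t)$-correction), the same dissipation identity $J'(t)=\frac{p\ell}{\ell+1}\int_D u^{\ell-1}u_t^{2}\,dz$ for $J=-$(your energy), the structural condition \eqref{cond1pde} plus the Poincar\'e inequality \eqref{poincare lp ineq} to obtain $E''(t)\ge\alpha(\ell+1)\bigl(J_0+\frac{p\ell}{\ell+1}\int_0^t\int_D u^{\ell-1}u_\tau^{2}\,dz\,d\tau\bigr)$, and the H\"older/Cauchy--Schwarz step with parameter $\delta=\sigma$ that fixes $\sigma=\frac{\sqrt{p\ell\alpha}}{\ell+1}-1$ and the stated value of $M$. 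Aside from the bookkeeping you already flagged, there is no gap; your route and the paper's coincide.
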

\begin{rem}
If $p=2$ and $\ell=1$ in Theorem \ref{app thm1}, then we recover the results of Suragan and the second author \cite[Theorem 1.5]{suragan2023sharp}.
\end{rem}
\begin{proof}[Proof of Theorem \ref{app thm1}] Throughout the proof, we assume that $u$ is a positive solution to (\ref{pde}). Let us denote
\begin{align}\label{et}
E(t)=\int_0^t \int_{D} u^{\ell+1}dzd\tau + M, \quad t \geq 0,
\end{align}
with some $M>0$ to be chosen later. It suffices to show that
\begin{align}\label{condt et}
E''(t)E(t)-(1+\sigma)(E'(t))^{2}\geq0.
\end{align}
holds for large enough $M>0$. First, we calculate $E'(t)$:
\begin{align*}
E'(t)=\int_{D}u^{\ell+1}dz=(\ell+1)\int_{D}\int_{0}^{t}u^{\ell}u_{\tau}d\tau dz+\int_{D}u^{\ell+1}_{0}dz.
\end{align*}
Then,
\begin{align*}
(E'(t))^{2}&=\left((\ell+1)\int_{D}\int_{0}^{t}u^{\ell} u_{\tau} d\tau dz\right)^{2}+\left(\int_{D}u^{\ell+1}_{0} dz\right)^{2}
\\&+2(\ell+1)\left(\int_{D}\int_{0}^{t}u^{\ell} u_{\tau} d\tau dz\right)\left(\int_{D}u^{\ell+1}_{0}dz\right).
\end{align*}
By utilizing H\"older and Cauchy-Schwarz inequalities, we obtain
\allowdisplaybreaks
\begin{align}
(E'(t))^{2}&\leq (\ell+1)^{2}(1+\delta)\left(\int_{D}\int_{0}^{t}u^{\ell} u_{\tau}d\tau dz\right)^{2} \nonumber 
\\&+\left(1+\frac{1}{\delta}\right)\left(\int_{D}u^{\ell+1}_{0}dz\right)^{2} \nonumber
\\&= (\ell + 1)^2(1 + \delta) \left( \int_D \int_0^t u^{(\ell + 1)/2 + (\ell - 1)/2} u_\tau d\tau dz \right)^2 \nonumber 
\\& + \left(1 + \frac{1}{\delta} \right) \left( \int_D u_0^{\ell + 1} dz \right)^2 \nonumber
\\&\leq (\ell + 1)^2(1 + \delta) \left( \int_D \left( \int_0^t u^{\ell + 1} d\tau \right)^{\frac{1}{2}} \left( \int_0^t u^{\ell - 1} u_\tau^2 d\tau \right)^{\frac{1}{2}} dz \right)^2 \nonumber
\\& + \left(1 + \frac{1}{\delta} \right) \left( \int_D u_0^{\ell + 1} dz \right)^2 \nonumber
\\&\leq (\ell + 1)^2(1 + \delta) \left( \int_0^t \int_D u^{\ell + 1} dz d\tau \right) \left( \int_0^t \int_D u^{\ell - 1} u_\tau^2 dz d\tau \right) \nonumber
\\& + \left(1 + \frac{1}{\delta} \right) \left( \int_D u_0^{\ell + 1} dz \right)^2 \label{e't^2}
\end{align}
for any $\delta>0$. By a similar procedure, we obtain $E''(t)$:
\begin{align}\label{e'' part 1}
E''(t) &= (\ell + 1) \int_D u^\ell u_t dz .
\end{align}
Substituting $u_{t}=\Delta_{\gamma,p}(u^{\ell})+f(u)$ from (\ref{pde}) into (\ref{e'' part 1}) and integrating by parts, we get
\begin{align*}
E''(t) &= (\ell+1)\int_{D}u^{\ell} \Delta_{\gamma,p}(u^{\ell})dz+(\ell+1)\int_{D}u^{\ell}f(u)dz
\\&= -(\ell + 1) \int_D |\nabla_\gamma u^\ell|^{p}dz + (\ell + 1) \int_D u^\ell f(u)dz.
\end{align*}
Now we apply the condition (\ref{cond1pde}) and the Poincar\'e inequality (\ref{poincare lp ineq}):
\begin{align}
E''(t)&\geq (\ell + 1) \int_D \left( \alpha F(u) - \beta u^{p\ell}  - \alpha\theta \right) dz - (\ell + 1) \int_D |\nabla_\gamma u^\ell|^{p}dz  \nonumber
\\&= \alpha(\ell + 1) \left( -\frac{1}{\ell + 1} \int_D |\nabla_\gamma u^\ell|^{p}dz + \int_D \left( F(u) - \theta \right) dz \right) \nonumber
\\& + (\alpha - \ell - 1) \int_D |\nabla_\gamma u^\ell|^{p}dz - \beta(\ell + 1) \int_D u^{p\ell} dz \nonumber
\\&\geq \alpha(\ell + 1) \left( -\frac{1}{\ell + 1} \int_D |\nabla_\gamma u^\ell|^{p}dz + \int_D \left( F(u) - \theta \right) dz \right) \nonumber
\\& + \left( \lambda_{1}(\alpha - \ell - 1) - \beta(\ell + 1) \right) \int_D u^{p\ell} dz \nonumber
\\&\ge \alpha(\ell + 1) \left( -\frac{1}{\ell + 1} \int_D |\nabla_\gamma u^\ell|^{p}dz + \int_D \left( F(u) - \theta \right) dz \right) \nonumber
\\&=: \alpha(\ell + 1) J(t), \label{j(t) part 1}
\end{align}
where
\begin{align}
J(t) &= J(0) + \int_0^t \frac{dJ(\tau)}{d\tau}d\tau  \nonumber 
\\&=J(0) - \frac{1}{\ell + 1} \int_0^t \int_D \frac{d}{d\tau}|\nabla_\gamma u^\ell|^pdz d\tau 
+ \int_0^t \int_D \frac{d}{d\tau}(F(u) - \theta)dz d\tau \nonumber
\\&= J(0) - \frac{p}{\ell + 1} \int_0^t \int_D |\nabla_\gamma u^\ell|^{p-2}\nabla_{\gamma}u^{\ell} \cdot \nabla_\gamma(u^\ell)_\tau dz d\tau \nonumber
\\& + \int_0^t \int_D F_u(u)u_\tau dz d\tau \nonumber
\\&= J(0) + \frac{p}{\ell + 1} \int_0^t \int_D (\Delta_{\gamma,p}(u^\ell) + f(u))(u^\ell)_\tau dz d\tau \nonumber
\\&= J(0) + \frac{p\ell}{\ell + 1} \int_0^t \int_D u^{\ell - 1} u_\tau^2 dz d\tau. \label{j(t) part 2}
\end{align}
Combining (\ref{j(t) part 1}) and (\ref{j(t) part 2}), we get 
\begin{align}\label{e'' part 2}
E''(t)\geq \alpha(\ell+1)J(0)+p \ell\alpha  \int_0^t \int_D u^{\ell - 1} u_\tau^2 dz d\tau.
\end{align}
We note that $J_{0}$ from (\ref{j0}) is actually equal to $J(0)$. Since $\alpha>\ell+1$, we have that $\sigma=\delta=\frac{\sqrt{p\ell\alpha}}{\ell+1}-1>0$. Putting (\ref{et}), (\ref{e't^2}) and (\ref{e'' part 2}) to (\ref{condt et}), we obtain
\begin{align*}
&E''(t)E(t) - (1 + \sigma)(E'(t))^2 
\geq \biggl(\alpha(\ell+1)J(0)+p \ell\alpha\int_0^t \int_D u^{\ell - 1} u_\tau^2 dz d\tau\biggr)\\&\times\left(\int_0^t \int_{D} u^{\ell+1} dz d\tau + M\right) - (\ell + 1)^2(1+\sigma)(1 + \delta) \left( \int_0^t \int_D u^{\ell + 1} dz d\tau \right) 
\\&\times\left( \int_0^t \int_D u^{\ell - 1} u_\tau^2 dz d\tau \right) - (1+\sigma)\left(1 + \frac{1}{\delta} \right) \left( \int_D u_0^{\ell + 1} dz \right)^2
\\&\geq \alpha M(\ell + 1)J(0)  + p\ell\alpha \left( \int_0^t \int_D u^{\ell+1} dz d\tau \right) \left( \int_0^t \int_D u_\tau^2 u^{\ell-1} dz d\tau \right) 
\\& - (\ell + 1)^2(1 + \sigma)(1 + \delta) \left( \int_0^t \int_D u^{\ell+1}dz d\tau \right) 
\left( \int_0^t \int_D u^{\ell-1}u_\tau^2 dz d\tau \right) 
\\& - (1 + \sigma)\left(1 + \frac{1}{\delta}\right) \left( \int_D u_0^{\ell+1} dz \right)^2 
\\&\geq \alpha M(\ell + 1)J(0) - (1 + \sigma)\left(1 + \frac{1}{\delta}\right) \left( \int_D u_0^{\ell+1} dz \right)^2.
\end{align*}
Since $J(0)>0$, we can choose $M$ to be large enough such that we have (\ref{condt et}). In particular, we can take
\begin{align*}
M = \frac{(1 + \sigma)(1 + 1/\sigma) \left( \int_D u_0^{\ell + 1}dz\right)^2}{\alpha (\ell + 1) J(0)}.   
\end{align*}
However, it also means that for $t\geq0$
\begin{align*}
\frac{d}{dt} \left( \frac{E'(t)}{E^{\sigma + 1}(t)} \right) \geq 0 
\Rightarrow 
\begin{cases}
E'(t) \geq \left( \frac{E'(0)}{E^{\sigma + 1}(0)} \right) E^{1 + \sigma}(t), \\
E(0) = M.
\end{cases}
\end{align*}
Considering $\sigma = \frac{\sqrt{p\ell\alpha}}{\ell + 1} - 1 > 0$, we obtain
\begin{align*}
- \frac{1}{\sigma} \left( E^{-\sigma}(t) - E^{-\sigma}(0) \right) \geq \frac{E'(0)}{E^{\sigma + 1}(0)} t,    
\end{align*}
which gives together with $E(0) = M$ that
\begin{align*}
E(t) \geq \left( \frac{1}{M^{\sigma}} - \frac{\sigma\int_{D}u^{\ell+1}_{0} dz}{M^{\sigma+1}}t\right)^{-\frac{1}{\sigma}}.
\end{align*}
Thus, we have observed that the blow-up time $T^*$ satisfies
\begin{align*}
0 < T^* \leq \frac{M}{\sigma \int_D u_0^{\ell + 1}dz},
\end{align*}
completing the proof.
\end{proof}

\subsection{Global existence for the doubly nonlinear PME} In this section, we show that under some assumptions, if a positive solution to (\ref{pde}) exists, its norm is globally controlled.
\allowdisplaybreaks
\begin{thm}\label{app thm2}
Assume that
\begin{align}\label{cond2pde}
\alpha F(u) \geq u^{\ell} f(u) + \beta u^{p\ell} + \alpha \theta, \quad u > 0,
\end{align}
where
\begin{align*}
F(u) = \frac{p\ell}{\ell+1} \int_0^u s^{\ell - 1} f(s)\,ds, \quad \ell \geq 1,
\end{align*}
for some
\begin{align*}
\theta \geq 0, \quad \alpha \leq 0 \quad \text{and} \quad \beta \geq \lambda_{1} \frac{\alpha - \ell - 1}{\ell + 1},    
\end{align*}
where $\lambda_1$ is the first eigenvalue of $-\Delta_{\gamma,p}$. Assume also that the initial data \( u_0 \in L^{\infty}(D) \cap W^{1,p}_{\gamma}(D) \) satisfies the inequality
\begin{align*}
J_0 := \int_D \left( F(u_0) - \theta \right)dz 
- \frac{1}{\ell + 1} \int_D |\nabla_{\gamma} u_0^{\ell}|^{p}dz > 0.   
\end{align*}
If \( u \) is a positive local solution of the problem (\ref{pde}), then it is global with the property
\begin{align*}
\int_D u^{\ell + 1} dz \leq \int_D u_0^{\ell + 1} dz. 
\end{align*}
\end{thm}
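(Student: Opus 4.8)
The plan is to run the same kind of energy / differential-inequality argument as in the proof of Theorem \ref{app thm1}, but now directly on the functional $E(t):=\int_D u^{\ell+1}\,dz$ (with no auxiliary constant) and to show that $E$ is non-increasing along any positive local solution of \eqref{pde}; the displayed estimate is then immediate, and global existence follows from the resulting a priori bound.

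First I would compute $E'(t)=(\ell+1)\int_D u^\ell u_t\,dz$, substitute $u_t=\Delta_{\gamma,p}(u^\ell)+f(u)$ from \eqref{pde}, and integrate by parts (the boundary terms vanish since $u^\ell=0$ on $\partial D$), obtaining
\[
E'(t)=-(\ell+1)\int_D|\nabla_\gamma u^\ell|^p\,dz+(\ell+1)\int_D u^\ell f(u)\,dz .
\]
Inserting \eqref{cond2pde} in the form $u^\ell f(u)\le\alpha F(u)-\beta u^{p\ell}-\alpha\theta$ and regrouping exactly as in the passage leading to \eqref{j(t) part 1}, with $J(t):=\int_D(F(u)-\theta)\,dz-\frac{1}{\ell+1}\int_D|\nabla_\gamma u^\ell|^p\,dz$, yields
\[
E'(t)\le\alpha(\ell+1)J(t)+(\alpha-\ell-1)\int_D|\nabla_\gamma u^\ell|^p\,dz-\beta(\ell+1)\int_D u^{p\ell}\,dz .
\]
The one place where the present argument genuinely differs from the blow-up case is the next step: here $\alpha\le0$ and $\ell\ge1$ force $\alpha-\ell-1<0$, so the $L^p$-Poincar\'e inequality \eqref{poincare lp ineq} applied to $u^\ell$, i.e. $\lambda_1\int_D u^{p\ell}\,dz\le\int_D|\nabla_\gamma u^\ell|^p\,dz$, must be used to bound the middle term \emph{from above}: $(\alpha-\ell-1)\int_D|\nabla_\gamma u^\ell|^p\,dz\le\lambda_1(\alpha-\ell-1)\int_D u^{p\ell}\,dz$. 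Combined with $\beta\ge\lambda_1\frac{\alpha-\ell-1}{\ell+1}$ (so that $\lambda_1(\alpha-\ell-1)-\beta(\ell+1)\le0$) and $\int_D u^{p\ell}\,dz\ge0$, the last two terms collapse to something nonpositive, leaving $E'(t)\le\alpha(\ell+1)J(t)$.

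To conclude, the same differentiation-and-integration-by-parts computation that produced \eqref{j(t) part 2} — using $F'(u)=\frac{p\ell}{\ell+1}u^{\ell-1}f(u)$ and substituting the equation once more — gives $J'(t)=\frac{p\ell}{\ell+1}\int_D u^{\ell-1}u_\tau^2\,dz\ge0$, hence $J(t)\ge J(0)=J_0>0$. Since $\alpha\le0$, it follows that $E'(t)\le\alpha(\ell+1)J(t)\le0$, so $\int_D u^{\ell+1}\,dz=E(t)\le E(0)=\int_D u_0^{\ell+1}\,dz$ on the whole interval of existence; this uniform a priori bound on the $L^{\ell+1}$-norm precludes finite-time blow-up, and global existence then follows from the standard local existence/continuation theory for \eqref{pde}. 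I expect the only delicate point to be the sign bookkeeping around this reversed Poincar\'e step — everything that was a lower bound in Theorem \ref{app thm1} becomes an upper bound here because $\alpha-\ell-1$ has changed sign, and one must also check that the $J'(t)$ identity and the integrations by parts are justified for the class of positive local solutions considered; the passage from the a priori bound to global existence itself is routine.
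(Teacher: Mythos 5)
Your proposal is correct and follows essentially the same argument as the paper: differentiate $\mathcal{E}(t)=\int_D u^{\ell+1}\,dz$, insert \eqref{cond2pde}, regroup around $J(t)$, use the Poincar\'e inequality \eqref{poincare lp ineq} together with $\beta \geq \lambda_{1}\frac{\alpha-\ell-1}{\ell+1}$ to discard the nonpositive remainder, and then use the monotonicity identity \eqref{j(t) part 2} with $\alpha\leq 0$ and $J_0>0$ to conclude $\mathcal{E}'(t)\leq 0$, hence $\mathcal{E}(t)\leq\mathcal{E}(0)$. Your only addition is the explicit appeal to local existence/continuation to pass from the a priori bound to global existence, which the paper leaves implicit.
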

\begin{proof}[Proof of Theorem \ref{app thm2}] Here, let us define
\begin{align*}
\mathcal{E}(t) = \int_D u^{\ell + 1}dz.    
\end{align*}
Applying (\ref{cond2pde}), Poincar\'e inequality (\ref{poincare lp ineq}) and \(\beta \geq \lambda_{1} \frac{\alpha - \ell - 1}{\ell + 1}\), we have
\begin{align*}
\mathcal{E}'(t) &= (\ell+1)\int_{D}u^{\ell}\Delta_{\gamma,p}(u^{\ell})dz+(\ell+1)\int_{D}u^{\ell}f(u)dz
\\&= (\ell + 1)\left(-\int_D |\nabla_\gamma u^\ell|^{p}dz + \int_D u^{\ell} f(u)dz\right)
\\&\leq(\ell+1)\left(-\int_D |\nabla_\gamma u^\ell|^{p}dz + \int_D \left( \alpha F(u) - \beta u^{p\ell} - \alpha\theta \right) dz\right)
\\&=\alpha(\ell + 1) \left( -\frac{1}{\ell + 1} \int_D |\nabla_\gamma u^\ell|^{p}dz + \int_D \left( F(u) - \theta \right) dz \right)
\\&- (\ell + 1 - \alpha) \int_D |\nabla_\gamma u^\ell|^{p}dz - \beta(\ell + 1) \int_D u^{p\ell}dz
\\&\leq \alpha(\ell + 1) \left( -\frac{1}{\ell + 1} \int_D |\nabla_\gamma u^\ell|^{p}dz + \int_D \left( F(u) - \theta \right) dz \right)
\\&- \left( \lambda_{1}(\ell + 1 - \alpha) + \beta(\ell + 1) \right) \int_D u^{p\ell}dz
\\&\leq \alpha(\ell + 1) \left( -\frac{1}{\ell + 1} \int_D |\nabla_\gamma u^{\ell}|^{p}dz + \int_D \left( F(u) - \theta \right) dz \right)
\\&=\alpha(\ell + 1) J(t),
\end{align*}
where the functional $J(t)$ is taken from the proof of Theorem \ref{app thm1}. Considering (\ref{j(t) part 2}) and the fact that $\alpha\leq0$ in $\mathcal{E}'(t)$, we get
\begin{align*}
\mathcal{E}'(t) \leq \alpha(\ell + 1)J(0) + p\ell\alpha  \int_0^t \int_D u^{\ell - 1}u_\tau^{2}dzd\tau \leq 0,    
\end{align*}
which implies
\begin{align*}
\mathcal{E}(t) \leq \mathcal{E}(0),    
\end{align*}
completing the proof. 
\end{proof}

\bibliographystyle{alpha}
\bibliography{citation}

\end{document}